\pdfoutput=1 

\hoffset=-1.2cm
\voffset=-1.2cm
\documentclass[12pt]{article}
\usepackage{amsmath,latexsym,amssymb,amsthm,enumerate,amsmath,amscd}
\usepackage[mathscr]{eucal}
\usepackage[all,cmtip]{xy}
\usepackage{graphicx}
\setlength{\topmargin}{0.0in}
\setlength{\textheight}{21.5cm}
\setlength{\evensidemargin}{0.35in}
\setlength{\oddsidemargin}{0.35in}
\setlength{\headsep}{0.1cm}
\setlength{\textwidth}{15.0cm}
\setlength{\parindent}{0.6cm}
\setlength{\unitlength}{1mm}

\theoremstyle{plain}
\newtheorem{theorem}{Theorem}[section]
\newtheorem{proposition}[theorem]{Proposition}
\newtheorem{corollary}[theorem]{Corollary}
\newtheorem{lemma}[theorem]{Lemma}
\theoremstyle{definition}
\newtheorem{definition}[theorem]{Definition}

\newtheorem{remark}[theorem]{Remark}
\newtheorem{note}[theorem]{Note}
\newtheorem{example}[theorem]{Example}

\newtheorem{question}[theorem]{Question}


\theoremstyle{plain}

\theoremstyle{definition}

\newtheorem*{uthm}{Theorem}

\newtheorem*{ack}{Acknowledgment}

\numberwithin{equation}{section}
\numberwithin{table}{section} 
\setcounter{secnumdepth}{2}
\setcounter{tocdepth}{2}

\def\End{\mathrm{End}}



\def\Ob{\mathrm{Ob}}
\def\Mat{\mathrm{Mat}}

\def\cha{\mathrm{char}\ }

\providecommand{\bysame}{\makebox[3em]{\hrulefill}\thinspace}

\def\<{\left<}
\def\>{\right>}

\def\ns{\footnotesize \it}

\def\max{\mathrm{max}}
\def\Ob{\mathrm{Ob}}
\renewcommand{\baselinestretch}{1.7}

\title{Bound on the Jordan type of a generic nilpotent matrix commuting with a given matrix}

\author{Anthony Iarrobino\\[.05in]
{\ns Department of Mathematics,  Northeastern University, Boston, MA 02115, USA.
}\\[.2in]
Leila Khatami\\[.05in]
{\ns Mathematics Department,  Union College, 807 Union Street,  Schenectady, NY 12308, USA. 
}\\[.2in]}

\begin{document}
\maketitle
\begin{abstract}
It is well-known that a nilpotent $n \times n$ matrix $B$ is
determined up to conjugacy by a partition of $n$  formed by the sizes
of the Jordan blocks of $B$. We call this partition the Jordan type of $B$. We
obtain partial results on the following problem: for any
partition $P$ of $n$ describe the type $Q(P)$ of a generic nilpotent
matrix commuting with a given nilpotent matrix of type $P$.
A conjectural description for $Q(P)$ was given by P.~Oblak and restated by
L.~Khatami. In this paper we prove  ``half" of this conjecture by
showing that this conjectural type is less than or equal to $Q(P)$ in
the dominance order on partitions.\footnote{{{\bf 2010 Mathematics Subject Classification}: Primary: 05E15; Secondary: 14L30, 05E10, 06A11, 15A30, 16S50.}
{{\bf keywords}: Jordan type, nilpotent matrix, commutator, partition.}}
\end{abstract}
\begin{ack} The authors trace beginnings of this work to discussions with Roberta Basili during her visit to Northeastern in summer 2008. An earlier version circulated and submitted in October 2011 listed Roberta Basili as a coauthor. She subsequently posted a different approach\footnote{R. Basili  arXiv:1202.3369, revised in June 2012.} to the problem, and has withdrawn as a coauthor of this article, feeling that her contribution here was not at that level. We appreciate those early discussions, but recognize that our current approach is substantially different. \par
 We are grateful for discussions between the first author and Tomaz 
Ko\v{s}ir and Polona Oblak in June 2008; there Polona Oblak communicated her beautiful conjecture concerning $Q(P)$, which has been a lodestone for our work. We are grateful to Jerzy Weyman, Don King and Bart Van Steirteghem for many discussions about the $P\to Q(P)$ problem and related questions, and to Andrei Zelevinsky for suggestions concerning the exposition. We are very grateful to Bart Van Steirteghem also for his extensive and helpful comments on drafts, that have led to substantial changes. We appreciate the careful reading and thoughtful suggestions of the referees.\par
 \par\medskip

\end{ack}
\section{Introduction}\label{intsec}
It is well known that the nilpotent commutator $\mathcal N_B$ of a Jordan block matrix $B$ whose eigenvalues are in a base field $\sf k$, is a direct sum of the nilpotent commutators corresponding to the generalized eigenspaces of $B$ \cite[p.338]{Ger}. The particular eigenvalue in each plays no further role, so henceforth we assume that $B$ is nilpotent.
\par
We fix an $n$-dimensional vector space $V$ over an infinite field $\mathsf k$, and an $n\times n$ nilpotent Jordan block matrix $B=J_P\in \Mat_n(\mathsf k )$ having $t$ Jordan blocks of sizes $p_i$ given by the partition $P\vdash n, P=(p_1,\ldots ,p_t),\,\  p_1\ge p_2\ge \cdots \ge p_t$. 
 Consider the centralizer $\mathcal C_B\subset \Mat_n(\mathsf{k})\cong\End_{\sf k}(V)$, which is the set of $n\times n$ matrices with entries in $\mathsf{k}$ that commute with $B$, 
and the subvariety $\mathcal N_B$ comprised of those matrices in $\mathcal C_B$ that are nilpotent.  Each element $A$  of $\mathcal N_B$ is in the conjugacy class of a Jordan block matrix $J_{P_A}$ of partition $P_A\vdash n$. We term the partition $P_A$ the \emph{Jordan type} of $A$. It is well known that $\mathcal N_B$ is
an irreducible algebraic variety \cite[Lemma 2.3]{Bas},\cite[Lemma 1.5]{BI}. Thus, there is a unique Jordan type $Q(P)=P_A$ associated to a generic matrix $A\in \mathcal N_B$ -- for $A$ in a suitable Zariski dense open subset of $\mathcal N_B$. And $Q(P)$ is greater in the dominance order \eqref{orbitclosureeq}
than any other Jordan type occurring for elements of $\mathcal N_B$.
Of course, a generic $A\in \mathcal N_B$ is usually not itself a Jordan block matrix.
\begin{question}\label{1.1quest} What is $Q(P)$? Determine $Q(P)$ algorithmically from $P$.
\end{question}  When $P$ is \emph{almost rectangular} -- the maximum part of $P$ minus the smallest part is at most one -- then it is easy to see that  $Q(P)=(n)$, a single block. R.~Basili showed that $Q(P)$ has $r_P$ parts, where $r_P$ is the minimum number of almost rectangular subpartitions $P_1,\ldots ,P_r$  needed for a decomposition $P=P_1\cup \ldots \cup P_r$, where by $P_1\cup P_2$ we mean the partition whose parts are the concatenation of those of $P_1$ and $P_2$ (\cite[Proposition 2.4]{Bas}, see also \cite[Theorem 2.17]{BIK1}).\par
Attached to the partition $P$ is a maximal subalgebra $\mathcal U_B\subset \mathcal N_B$ (Section \ref{UBsec}). A key combinatorial object attached to the partition $P$ and defined from $\mathcal U_B$ is the poset $\mathcal D_P$, which has $n$ elements corresponding to a certain basis ${\sf B}=\{b_1,\ldots ,b_n\}$ of $V$. We regard these as being arranged in $t$ rows: each row corresponds to a part $p_i$ of $P$: the $i$-th row is comprised of the basis of a $B$-invariant subspace of $V$ isomorphic to ${\sf k}[B]/B^{p_i}, i\in\{1,\ldots ,t\}$   (see Definition~\ref{DPdef} below). This poset was defined by P.~Oblak as the digraph associated to the maximal subalgebra $\mathcal U_B$ of $\mathcal N_B$: for $ b,b'\in \sf B$ set $ b\le b'$ in $\mathcal D_P$ if $A_{b,b'}\not=0 $ for $ A$ generic in $\mathcal U_B$, when $A$ is expressed in the basis $\sf B$ \cite{Obl1,BIK1}.\footnote{The poset $\mathcal D_P$ is used implicitly by P. Oblak -- the possible edges in her $(\mathcal N_B,A)$ graphs determine the comparable elements of $\mathcal D_P$. The algebra $\mathcal U_B$ was used in Section 4 of \cite{Obl1} and formally defined and studied in \cite{BIK1}. Our graphs are drawn as the transpose of those in \cite{Obl1}, and are rotated ninety degrees. } Furthermore, any matrix $A\in \mathcal N_B$ is conjugate by a matrix in the centralizer $\mathcal C_B$ to one in $\mathcal U_B$, so we may restrict to $\mathcal U_B$ in determining $Q(P)$ \cite{Bas,TuAi,BIK1}.\par
 P. Oblak \cite{Obl1} for $\cha \sf k=0$ and subsequently the first author and R. Basili for $\sf k$ algebraically closed (unpublished) determined the \emph{index}-- largest part --
$ {\mathrm i}(Q(P))$ of $Q(P)$ in terms of the poset $\mathcal D_P$. Let $P=(\ldots , i^{n_i},\ldots )$ where $i$ has multiplicity $n_i$. A $U$-chain of $\mathcal D_P$ is a maximal chain whose vertices are comprised of those in the rows of $\mathcal D_P$ corresponding to the parts of an almost rectangular (AR) subpartition $P'\subset P$, union two \emph{hooks} -- one from the source and the other to the sink of $\mathcal D_P$ (see Definition \ref{Uchaindef}).  We associate to an AR subpartition $P'=(a^{n_a},(a-1)^{n_{a-1}})$ of $ P$ the invariant $ob (P')$ which is the length -- number of vertices -- of the unique $U$-chain $U_a$ containing $P'$:
\begin{equation}\label{obeq}
ob(P')=\mid U_a\mid = an_a+(a-1)n_{a-1}+\sum_{c>a} 2n_c.
\end{equation}
\begin{theorem}\label{obthm} \cite{Obl1} Let $\cha \sf k=0$.\footnote{See \cite[Theorem 3.3]{BIK1} for the case $\sf k$ algebraically closed.  Corollary \ref{charcor} shows that Theorem~\ref{obthm} implies its analogue for $\sf k$ infinite.} The index ${\mathrm i}(Q(P))$ is the length of the longest chain in $\mathcal D_P$, and is also the length of the longest $U$-chain in $\mathcal D_P$. 
\end{theorem}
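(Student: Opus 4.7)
The plan is to prove both equalities simultaneously by establishing the chain of inequalities
\[
\ell_U \;\leq\; \mathrm{i}(Q(P)) \;\leq\; \ell_{\mathrm{ch}} \;\leq\; \ell_U,
\]
where $\ell_U$ denotes the length of a longest $U$-chain and $\ell_{\mathrm{ch}}$ the length of a longest chain in $\mathcal{D}_P$. By Basili's reduction recalled above, the generic type $Q(P)$ is already attained on $\mathcal{U}_B$, so $\mathrm{i}(Q(P))$ equals the nilpotency index $\nu(A)$ of a generic $A \in \mathcal{U}_B$.

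For the upper bound $\mathrm{i}(Q(P)) \leq \ell_{\mathrm{ch}}$, I would use that by the very definition of $\mathcal{D}_P$, the entry $A_{b,b'}$ of a generic $A \in \mathcal{U}_B$ expressed in the basis $\mathsf{B}$ is nonzero exactly when $b' \leq b$ in $\mathcal{D}_P$. Expanding
\[
(A^{k-1})_{b,b'} \;=\; \sum_{b = c_0,\, c_1,\, \dots,\, c_{k-1} = b'} A_{c_0,c_1} A_{c_1,c_2} \cdots A_{c_{k-2},c_{k-1}},
\]
any nonzero summand gives a sequence $b' = c_{k-1}\leq \cdots \leq c_0 = b$ in $\mathcal{D}_P$; the vanishing of diagonal entries of $A$ in the $B$-Jordan basis lets one refine this to a strict chain of length $k$, so $A^{k-1}\neq 0$ forces $k\leq \ell_{\mathrm{ch}}$.

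For the lower bound $\mathrm{i}(Q(P))\geq \ell_U$, I would construct, for each AR subpartition $P' = (a^{n_a},(a-1)^{n_{a-1}})\subset P$, an explicit element $A_{P'}\in \mathcal{U}_B$ whose iterated action on the source vertex of the $U$-chain $U_a$ traces $U_a$ step by step to its sink. Within-row edges are furnished by $B$ itself, and the between-row edges forming the two hooks are implemented by specific generators of $\mathcal{U}_B$ described in \cite{BIK1}. Verifying that the $(\mathrm{sink},\mathrm{source})$ entry of $A_{P'}^{ob(P')-1}$ is nonzero reduces to exhibiting a single monomial with no cancellation coming from competing paths, giving $\nu(A_{P'})\geq ob(P')$; maximizing over $P'$ yields $\mathrm{i}(Q(P))\geq \ell_U$.

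The main obstacle is the combinatorial inequality $\ell_{\mathrm{ch}}\leq \ell_U$: every chain in $\mathcal{D}_P$ is contained in a $U$-chain. I would fix a maximal chain $C$ and let $a$ be the smallest part of $P$ represented by three or more vertices of $C$; then one argues that $C$ must exhaust the rows of length $a$ and $a-1$ it meets, contains at most two vertices from each longer row, and no vertices from rows of length $<a-1$, so that $|C|\leq ob(a^{n_a},(a-1)^{n_{a-1}})$. The delicate step here is ruling out chains that zig-zag through three or more vertices of a long row, and this is where one must use the precise between-row comparabilities in $\mathcal{D}_P$ coming from the parameterization of $\mathcal{U}_B$; I expect this combinatorial analysis, which forces $C$ into $U$-chain shape, to be the heart of the proof.
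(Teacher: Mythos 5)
First, note that the paper does not prove Theorem \ref{obthm} at all: it is quoted from \cite{Obl1} (and \cite[Theorem 3.3]{BIK1} for $\sf k$ algebraically closed), so there is no proof in the text to compare with; your proposal has to stand on its own. Your overall skeleton $\ell_U\le \mathrm{i}(Q(P))\le \ell_{\mathrm{ch}}\le \ell_U$ is the right one, and the two linear-algebra bounds are essentially sound: the upper bound is the incidence-algebra observation that $(A^{k-1})_{b,b'}$ is a sum over length-$k$ chains in $\mathcal D_P$ (this is the index-level case of the inequality $\lambda(\mathcal D_P)\ge Q(P)$ of \eqref{qlambdaeq}), and for the lower bound your worry about cancellation is unfounded: the $(\mathrm{source},\mathrm{sink})$ entry of a power is a sum of path monomials with coefficient $+1$, so with indeterminate (or generic positive) coefficients it is nonzero in characteristic $0$, and since $Q(P)$ dominates every $P_A$, one special $A_{P'}$ suffices.

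The genuine gap is exactly where you expect it, but your sketched structure theorem is not correct as stated, so the gap is not merely one of detail. Take $P=(5,2)$ and the maximal chain $(1,5)\to(2,5)\to(2,2)\to(5,5)$: here the smallest part met by three or more vertices is $a=5$, yet the chain contains a vertex at level $2<a-1$, contradicting your claim that a maximal chain has ``no vertices from rows of length $<a-1$''. Maximality of the chain alone does not force the $U$-chain shape; what is true is only the inequality $|C|\le \max_a ob(a)$, and proving it requires an accounting or exchange argument in which visits to levels below $a-1$ are traded against the number of times the chain can re-enter long rows, controlled by inequalities such as \eqref{1eq}--\eqref{3eq} and the crossing bounds of Corollary \ref{bpowcor}. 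Likewise the claim ``at most two vertices from each longer row'' is essentially equivalent to the hard part and cannot simply be asserted. This combinatorial step is precisely the content of Oblak's theorem (the proof occupying \cite{Obl1}, reworked in \cite[Theorem 3.3]{BIK1}), so as written your proposal reduces the theorem to its hardest ingredient without supplying it.
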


\subsection{A New Lower Bound for $Q(P)$.}

 C. Greene, E. R. Gansner, and S.~Poljak associate to any finite poset $\mathcal D$ a partition $\lambda (\mathcal D)$ defined from its chains, as follows \cite{Gre,Gans,Pol,BrFo}. Let $a_{\mathcal D}$ be the minimum number of chains needed to cover $\mathcal D$. Set
$c_0(\mathcal D)=0$, and for every $i\in \{1,\ldots ,a_{\mathcal D}\}$ set
\begin{align} 
&c_i=c_i({\mathcal D} )= \max \{ \# \text{ vertices of $ \mathcal D$  covered by $i$ chains}\},\\ 
& \lambda_i (\mathcal D)=c_i-c_{i-1}.\label{lambdaDeq}
\end{align}

One can construct similarly to $\lambda (\mathcal D_P)$ a possibly different partition $\lambda_U(\mathcal D_P)$ using $s$-$U$-chains in place of arbitrary chains (Definitions \ref{suchaindef}, \ref{lambdaUdef}). P. Oblak had conjectured that $Q(P)$ could be obtained by a recursive process, first picking a maximum-length chain $C_1$ in $\mathcal D_P$, then a maximum length chain $C_2$ in a new, smaller poset $\mathcal D_{P'}$ where the partition $P'={P-C_1}$ is defined through removing $C_1$ from $\mathcal D_P$ and counting the vertices left in  each row (warning: $\mathcal D_{P'}$ does not have the induced partial order from $\mathcal D_P$). And so on  for $r_P$ steps. Then $Q(P)$ is conjecturally the set of lengths of the chains \cite{BKO}.
The second author has shown that any such Oblak process $\mathcal O$ yields a partition $\Ob_{\mathcal O}(P)=(|C_1|,|C_2|,\dots |C_r|)$ satisfying $\Ob_{\mathcal O}(P)=\lambda_U(\mathcal D_P)$ \cite[\S 2]{Kha}. Thus, the Oblak conjecture  for $Q(P)$ is equivalent to a positive answer to
\begin{question}\label{1.4quest} Is $Q(P)=\lambda_U(\mathcal D_P)$?
\end{question}
Recall the {\emph{dominance} or \emph{orbit closure} order on the set of partitions of $n$ \cite{Ger}. Let $P=(p_1,\ldots ,p_t)$ with $p_1\ge \cdots \ge p_t$ and $  P'=(p'_1,\ldots ,p'_{t'})$ with $ p'_1\ge \cdots \ge p'_{t'}$ be partitions of $n$. Then
\begin{equation}\label{orbitclosureeq}
 P\ge P' \Leftrightarrow \forall i, \sum_{k=1}^ip_k\ge  \sum_{k=1}^ip'_k.
\end{equation}
Our main result is \par\noindent
{\bf Theorem \ref{mainthm}}. Let $\sf k$ be an infinite field, then
\begin{equation}\label{maineq1}
Q(P)\ge \lambda_U(\mathcal D_P).
\end{equation} 

To prove this, we first work over a polynomial ring $\sf R$ over $\sf k$ and define in \eqref{Aeq}} a nilpotent matrix $A_{\sf R} \in \Mat_{\sf R}(n)\cong \End_{\sf R}(V\otimes {\sf R})$ which commutes with $B$. We then show  that $P_{A_{\sf R}}\ge \lambda_U(\mathcal D_P)$, when we consider $A_{\sf R}$ as an element of $\Mat_{\sf F}(n)$, with $\sf F$ the quotient field of $\sf R$ (Corollary \ref{maincor}).\par
To prove that $P_{A_{\sf R}}\ge \lambda_U(\mathcal D_P)$, we show in Theorem \ref{sthm}, that for every $s\in \{1,\ldots , r_P\}$ there exist $v_1,\ldots ,v_s\in V$ such that $\dim_{\sf F}\langle {\sf F}[A_{\sf R}]\circ\{v_1,\ldots ,v_s\}\rangle $ is at least the sum $c^s_U(\mathcal D_P)$ of the first $s$ parts of $\lambda_U(\mathcal D_P)$.  Indeed, together with a well-known property of nilpotent matrices (Lemma \ref{ranklem}), this establishes the desired inequality.\par
In turn, the proof of Theorem \ref{sthm} boils down to showing that for a maximal $s$-$U$-chain $\mathfrak A$, a certain $\sf F$ linear map $\pi_{\mathfrak A}=\pi_{\mathfrak A}(U,A_{\sf R})$ defined between a subspace of ${\sf F}[x]\otimes_{\sf k} V\cong {\sf F}[x]\otimes_{\sf F}V_{\sf F}$, and a subspace of $V\otimes {\sf F}$, both of dimension $c_U^s(\mathcal D_P)$, is an isomorphism. The domain of
$\pi_{\mathfrak A}$ is a subspace $\mathcal T_{\mathfrak A}$ of $ {\sf F }[x]\otimes_{\sf F} \langle v_1,\ldots ,v_s\rangle$ where the $v_i$ are the initial vertices of the $s$ component chains of $U_{\mathfrak A}$, and its co-domain is the span of all the vertices covered by $U_{\mathfrak A}$: it has matrix $M_{\mathfrak A}$ in a suitable basis for each (Definition \ref{pidef}).
We show, and this is the heart of the matter, that $\det (M_{\mathfrak A})\not=0$ by an analysis of the sets of chains from the initial vertices $v_i$ to all the vertices covered by $U_{\mathfrak A}$.
A final step in the proof of \eqref{maineq1} is to specialize to $\sf k$ (Theorem \ref{mainthm}).\par
We next state some further results and questions concerning $Q(P)$. In section \ref{DPsec} we define the poset $\mathcal D_P$, the multi-$U$-chains, the homomorphism $\pi_{\mathfrak A}$ and show some properties we will need.  In section \ref{3sec} we show Theorem \ref{mainthm}. We first give a simple example where $P$ is not AR to illustrate naively the problem of determining $Q(P)$.
\begin{example}\label{421ex} Let $P=(4,2,1)$. Since $P=(4)\cup (2,1)$ is a minimal decomposition into almost rectangular subpartitions,  we have $r_P=2$, and we shall see that $Q(P)=(5,2)$. Here the basis
 $\sf B=\{\sf{a,b,c,d,e,f,g}\} $ with $B{\sf a}={\sf b},B{\sf b=c},B{\sf c=d},B{\sf d}=0,B{\sf e=f},B{\sf f=0}, $ and $B\sf g=0$. Since $A $ and $B$ commute, $A\in\mathcal U_B$ is determined by its action on the $B$-cyclic vectors $\{{\sf a,e,g} \}$ of $V$. To obtain a general enough $A$ so that $P_A=(5,2)=Q(P)$  we may take (Figure \ref{421afig})
\begin{equation}\label{421matrix1eq}
A\cdot {\sf a=b+e}, A\cdot {\sf e=c+g}, A\cdot {\sf g=f}.
\end{equation}

\vskip 0.3cm
\begin{figure}[hbtp]
\begin{center}
\leavevmode 
\begin{picture}(25,15)(25,-15)
\setlength{\unitlength}{1mm}
\thicklines
\multiput(0,0)(0,-4){2}{\line(1,0){24}}
\multiput(0,-8)(0,-4){2}{\line(1,0){6}}
\multiput(0,-4)(0,-4){2}{\line(1,0){12}}
\multiput(0,0)(6,0){2}{\line(0,-1){12}}
\multiput(12,0)(6,0){1}{\line(0,-1){8}}
\multiput(12,0)(6,0){3}{\line(0,-1){4}}
\put(3,-2){\makebox(0,0){{\small \mbox{$\sf a$}}}}
\put(9,-1.7){\makebox(0,0){{\small \mbox{$\sf b$}}}}
\put(15,-2.1){\makebox(0,0){{\small \mbox{$\sf c$}}}}
\put(3,-10){\makebox(0,0){{\small \mbox{$\sf g$}}}}
\put(3,-6){\makebox(0,0){{\small \mbox{$\sf e$}}}}
\put(9,-6){\makebox(0,0){{\small \mbox{$\sf f$}}}}
\put(21,-1.7){\makebox(0,0){{\small \mbox{$\sf d$}}}}
\end{picture}
\end{center}
\vspace{-1cm}
\protect\caption{$P=(4,2,1), Q(P)=(5,2)\qquad\qquad$.}\label{421afig}
\end{figure} 
\noindent
In  Example \ref{421advex} we apply the proof method of this paper to $P=(4,2,1)$: the endomorphism $A$ above is obtained by substituting $1$ for each of the variables of $\sf R$ in the matrix $A_{\sf R}$ of \eqref{421adveq}. 
\end{example}
In Examples \ref{4221aex} and \ref{2.5ex} below we determine $\lambda_U(P)$ for $P=(4,2,2,1)$ and $P=(5,4,3,3,2,1)$. By Corollary \ref{main2cor}, $Q(P)=\lambda_U(P)$ for these $P$, since $r_P\le 3$. 
\subsection{Some open questions.}

 Recall that the incidence algebra $\mathfrak I (\mathcal D)$ of the $n-$element poset $\mathcal D$ is the algebra of $n\times n$ matrices $M$ satisfying $M_{uv}\in \sf k$ if $u\le v$, and $M_{uv}=0$ if $u\not<v$.

  The nilpotent matrices $\mathcal N(\mathcal D)$ in $\mathfrak I(\mathcal D)$ are those such that $\forall u \,\,m_{uu}=0$.  Suppose that $\mathcal D$ is acyclic, as  is true for the posets $\mathcal D_P$ we  consider. Then these nilpotent matrices have entries $m_{uv}\in \sf k$ that are arbitrary for intervals $[u,v]$ with $u<v$. Then, evidently, $\mathcal N(\mathcal D)$ is an irreducible variety. We have
\begin{uthm}(\cite{Gans,Sa1}, see also \cite[Theorem 6.1]{BrFo}). 
 A. Let $\mathcal D$ be a finite poset and suppose $\cha k=0$. A generic nilpotent matrix $M\in \mathcal N(\mathcal D)$ has Jordan type $P_M =\lambda (\mathcal D)$.\par
B. (\cite{Sa2}, see also \cite[Proof of Theorem 6.1]{BrFo}.\footnote{This is stated in slightly different language for $\sf k=\mathbb C$ in \cite[Theorem 5.16ii]{Sa2}, however the proof there of \eqref{qlambdaeq} does not depend on characteristic, nor require $A$ to have generic entries nor be ``free'' in the language of \cite{Sa2}. Likewise, the proof of Theorem 6.1 in \cite{BrFo}, although stated for ${\sf k}=\mathbb R$, shows \eqref{qlambdaeq} for $\sf k$ infinite.} Let $\sf k$ be an infinite field, let $\mathcal D$ be an acyclic poset, and $M\in \mathcal N(\mathcal D)$. Then
\begin{equation}\label{qlambdaeq}
\lambda (\mathcal D)\ge P_M.
\end{equation} 
\end{uthm}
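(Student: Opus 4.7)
The plan is to handle the two parts in turn, starting with the upper bound (B) and then upgrading it to the generic equality (A). Throughout I would use the standard rank characterization (cf.\ Lemma~\ref{ranklem})
\[
p_1 + \cdots + p_k \;=\; \max_{\dim W = k} \dim\bigl({\sf k}[M]\cdot W\bigr),
\]
valid for any nilpotent $M$ on $V$, which converts each assertion about Jordan type into a statement about ${\sf k}[M]$-submodules of $V$. For (B) this reduces the claim $p_1+\cdots+p_k\le c_k(\mathcal D)$ to the uniform bound $\dim({\sf k}[M]\cdot W)\le c_k(\mathcal D)$ for every $k$-dimensional $W\subseteq V$. The key observation is that when $M\in\mathfrak I(\mathcal D)$, $M\cdot e_v$ is supported on $\{u:u<v\}$, so $M$ strictly lowers the depth function $d(u)=$ length of the longest chain of $\mathcal D$ ending at $u$. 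Given a decomposition of ${\sf k}[M]\cdot W$ into cyclic ${\sf k}[M]$-submodules $\bigoplus_{i=1}^\ell\langle x_i\rangle$ with $\ell\le k$ and Jordan sizes $q_1\ge\cdots\ge q_\ell$, I would extract, by descending induction on depth together with a greedy selection of distinguished vertices from the supports of the $M^j x_i$'s, pairwise disjoint chains $\Gamma_1,\dots,\Gamma_\ell$ in $\mathcal D$ with $|\Gamma_i|\ge q_i$. This yields $\dim({\sf k}[M]\cdot W)=\sum_i q_i\le c_\ell(\mathcal D)\le c_k(\mathcal D)$, proving (B).

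For (A), by (B) it suffices to show that for generic $M\in\mathcal N(\mathcal D)$ there exists a $k$-dimensional $W$ with $\dim({\sf k}[M]\cdot W)=c_k(\mathcal D)$. Pick vertex-disjoint chains $C_1,\dots,C_k$ in $\mathcal D$ realizing $c_k(\mathcal D)$, let $v_i$ be the top element of $C_i$, and take $W=\langle e_{v_1},\dots,e_{v_k}\rangle$. Form the matrix whose columns are the vectors $M^je_{v_i}$ expanded in the basis $\{e_u\}_{u\in\mathcal D}$, with rows indexed by $\bigcup_i C_i$: its entries are polynomials in the free variables $\{M_{u,v}\}_{u<v}$, and a Lindström--Gessel--Viennot-type expansion writes its relevant maximal minor of size $c_k(\mathcal D)$ as a signed sum over $k$-tuples of paths in $\mathcal D$ from the $v_i$'s to the vertices of $\bigcup_iC_i$. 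In characteristic zero the monomial corresponding to the chosen vertex-disjoint chains $C_1,\dots,C_k$ survives with a nonzero integer coefficient, so for $M$ in a dense Zariski open subset of $\mathcal N(\mathcal D)$ this minor is nonzero. Hence $\dim({\sf k}[M]\cdot W)=c_k(\mathcal D)$, giving $P_M\ge\lambda(\mathcal D)$, and combined with (B) the equality $P_M=\lambda(\mathcal D)$.

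The hardest step is the chain-extraction argument in (B): given a cyclic decomposition of ${\sf k}[M]W$, producing pairwise disjoint chains $\Gamma_i\subseteq\mathcal D$ with $|\Gamma_i|\ge q_i$ requires careful bookkeeping of the supports of Jordan chains across different cyclic generators and handling of possible overlaps, which is the technical heart of the Saks/Brualdi--Fomin argument. In (A) the analogous subtlety is verifying nonvanishing of the Lindström--Gessel--Viennot sum, which is precisely where the characteristic-zero hypothesis enters: in small positive characteristic a generically surviving integer coefficient could specialize to zero.
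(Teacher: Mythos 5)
This theorem is not proved in the paper at all --- it is quoted from Gansner, Saks and Britz--Fomin (with a footnote only explaining why part B holds over any infinite field) --- so the comparison is with those cited proofs, and your sketch leaves exactly their hard content unproved. For part B, the step you defer as ``bookkeeping'' is the entire theorem: the claim that from a cyclic decomposition of ${\sf k}[M]\cdot W$ with block sizes $q_1\ge\cdots\ge q_\ell$ one can greedily extract \emph{pairwise disjoint} chains $\Gamma_i$ with $|\Gamma_i|\ge q_i$ is strictly stronger than the dominance inequality being proved and cannot follow from numerics or from a depth-descending greedy selection alone: for the poset consisting of a $3$-chain plus an isolated point, $\lambda(\mathcal D)=(3,1)$ dominates $(2,2)$, yet no two disjoint chains of size $2$ exist, so any such extraction must use the module structure of $M$ in an essential way. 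Your depth-lowering observation only yields the $k=1$ case (the index bound, i.e.\ $\mathrm{rank}\,M^j$ is bounded by the number of vertices of height $>j$); the passage from there to $\sum_{i\le k}q_i\le c_k(\mathcal D)$ is the Gansner--Saks/Britz--Fomin argument (via rank inequalities for powers and Greene--Kleitman/Menger-type combinatorics), none of which is reproduced. (Also the reference is Britz--Fomin, not Brualdi--Fomin.)

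For part A the gap is of the same nature: the assertion that ``the monomial corresponding to the chosen vertex-disjoint chains survives with a nonzero integer coefficient'' is precisely what needs proof, and the Lindstr\"om--Gessel--Viennot involution does not apply as stated, since the path systems contributing a fixed monomial of your minor are not required to be nonintersecting and several distinct systems, attached to different permutations and hence different signs, can produce the same monomial --- exactly the phenomenon exhibited in Example \ref{3.4ex}. Establishing such a noncancellation statement is the technical heart of this paper (Proposition \ref{2lem} and Theorem \ref{sthm}), and even there it is achieved only for the deliberately sparse specialization $A_{\sf R}$ and for the specially nested $U$-chains, not for a fully generic $M\in\mathcal N(\mathcal D)$ together with an arbitrary Greene-optimal family $C_1,\ldots,C_k$; it is likewise not justified that $W=\langle e_{v_1},\ldots,e_{v_k}\rangle$ spanned by the tops of one optimal family realizes $c_k(\mathcal D)$ for generic $M$. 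What is sound in your plan --- the reduction via Lemma \ref{ranklem}, the height filtration, and the identification of characteristic zero as entering through nonvanishing of an integer coefficient --- matches the spirit of the cited proofs and of the paper's own method for Theorem \ref{mainthm}, but as written both key steps are asserted rather than proved.
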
     
The commutator subset $\mathcal C_B\cap \mathfrak I(\mathcal D_P) \subset\mathfrak I (\mathcal D_P)$ of the incidence algebra of $\mathcal D_P$ consists of those $A\in \mathfrak I (\mathcal D_P)$ whose entries $A_{uv}$ satisfy, for $u,v\in \sf B$
\begin{equation}\label{commutatoreq}
u\le v \text { and } Bu, Bv \not=0 \Rightarrow A_{B\cdot u, B\cdot v}=A_{uv}.
\end{equation}
This is a Toeplitz condition on the blocks of $A$ (see \cite[Lemma 2.2]{Bas}, \cite{TuAi}).
\par Since $\mathcal D_P$ is acyclic, the nilpotent matrices $\mathcal N(\mathcal D_P)\subset \mathfrak I(\mathcal D_P)$ form an irreducible family satisfying $\mathcal N(\mathcal D_P)\supset \mathcal U_B=\mathcal C_B\cap \mathcal N(\mathcal D_P)$. By \eqref{qlambdaeq} we have $\lambda (\mathcal D_P)\ge Q(P)$.
\begin{question}\label{1.3quest} Is $Q(P)=\lambda (\mathcal D_P)$?
\end{question}
  We can also ask the seemingly purely combinatorial question
\begin{question}\label{1.5quest}
 Is $\lambda(\mathcal D_P)= \lambda_U(\mathcal D_P)$?
\end{question}
 In view of Theorem \ref{mainthm} and \eqref{qlambdaeq} a positive answer to Question \ref{1.5quest} would also imply  ``yes'' to Questions \ref{1.4quest} and \ref{1.3quest}. In this direction P. Oblak in Theorem \ref{obthm} showed that the index -- largest part --  of $\lambda(\mathcal D_P)$ and of $\lambda_U(\mathcal D_P)$ are the same, and the second author has shown that
the minimum parts of $\lambda(\mathcal D_P)$ and $\lambda_U(\mathcal D_P)$ are the same \cite{Kha2}. Together with Theorem \ref{mainthm} their results imply "Yes" to Question \ref{1.5quest} and Question \ref{1.4quest} when $r_P\le 3$ (Corollary~\ref{3.9cor}).
\par

Even if the Questions above were answered, it could still be a nontrivial combinatorial problem to identify compactly which partitions $P$ satisfy $Q(P)=Q$ for a given partition $Q$. This is discussed in \cite{Obl2}.

\subsubsection{What else is known about $Q(P)$?}
T. Ko\v{s}ir and P.~Oblak showed that the Artinian algebra $\mathcal A=\mathsf{k}[A,B]$ is Gorenstein for general enough $A\in \mathcal U_B$ \cite[Corollary 5]{KO}. The present authors with R. Basili gave sufficient conditions on $A\in \mathcal U_B$ for the algebra $\mathcal A$ to be Gorenstein \cite[Theorem~2.20]{BIK1}. When $\cha \mathsf{k}=0$ or $\cha \mathsf{k}>n$ the partition $Q(P)$ is dual to the Hilbert function $H(\mathsf{k}[A,B])$, viewed as a partition of $n$, for generic $A\in \mathcal N_B$ \cite[Theorem 2.23]{BI}.  From these last results it follows that when $\cha \mathsf{k}=0$ or $\cha\mathsf{k}>n$, $Q(P)$ has parts differing pairwise by at least two, and that $Q(Q(P))=Q(P)$ (\cite[Theorem 6]{KO}, see also \cite[Section 2.5]{BIK1}).\par
 A. Premet, 
G.~McNinch and D.I. Panyushev studied  pairs of commuting nilpotent matrices in the broader context of Lie algebras \cite{Prem,McN,Pan}. V. Baranovsky, R. Basili and others have related the study of commuting nilpotent matrices to the punctual Hilbert scheme of a plane \cite{Bar,Bas,Prem,BI}. R. Guralnick and A.~Sethuranam, K. \v{S}ivic and others have studied commuting pairs and triples of matrices: see \cite{GurSe,SeSi,Si1,Si2} and the references given there. \par

\section{The algebra $\mathcal U_B$ and the poset $ \mathcal D_P$.}\label{DPsec}
\subsection{The algebra $\mathcal U_B$.}\label{UBsec}
We now define a maximal subalgebra $\mathcal U_B$ of $\mathcal N_B$.
Fix an integer $n$ and let $P\vdash n$ be the partition $P=(p_1,p_2,\ldots p_t)$, $
p_1\ge p_2\ge \cdots \ge p_t$ or, in second notation $P=({p_1}^{n_{p_1}},\ldots , i^{n_i}, \ldots ,1^{n_1})$ --
where $n_i $  -- possibly zero -- is the multiplicity of the part $i$ of $P$. Here $n=\sum p_i=\sum i\cdot n_i$. Denote by $S_P=\{ i\mid n_i\not= 0\}$  the set of integers occuring as parts of $P$. For each subpartition $P'$ of $P$ we denote by $\iota (P')\subset S_P$ the set of integers occuring in $P'$.
We have $V=\oplus_{i\in S_P} V_i$, where $V_i$ has a decomposition
\begin{equation}\label{Vsumeq}
V_i=\oplus V_{i,k} \mid 1\le k\le n_i ,
\end{equation}
into cyclic $B$-modules $V_{i,k}$, each of length $i$. The subspace $V_{i,k}$ has a   cyclic vector $(1,i,k)$ and basis 
\begin{equation}\label{Vbasiseq}
 \{(u,i,k)=B^{u-1}(1,i,k), \, 1\le u\le i\}.
\end{equation}
So $V_{i,k}\cong {\sf k}[x]/x^i$ as a ${\sf k}[x]$-module through the action of $B$.
\begin{definition}\label{basisdef} We denote by $\sf B$ the basis of $V$ that is the union of the bases for $V_{i,k}$ defined in \eqref{Vsumeq}, \eqref{Vbasiseq}. 
For each $i\in S_P$ denote by $W_i\subset V_i$ the $n_i$-dimensional subspace of $V$ spanned by the level-$i$ cyclic vectors,
\begin{equation}\label{Wieqn}
W_i=\langle\{ (1,i,k), 1\le k\le n_i \}\rangle ,
\end{equation}
with basis ordered by ``$k$''. Let $W=\oplus_{i\in S_P} W_i$.
\end{definition}
 We have $W\cong V/{\mathrm Im}(B)$, where  $\mathrm {Im}(B)$ denotes the image $B( V)$. Denote by $\kappa_i$ the natural projection:  $V\to W\to W_i$.
Let $\mathcal M_B=\prod_{i\in S_P} \End_{\mathsf k}(W_i)$ and define
\begin{equation}
\varphi_i:  \mathcal C_B\to \End_{\mathsf k} W_i\cong \Mat_{n_i}({\mathsf k}):\, \varphi_i (A)= \kappa_i (A_{|W_i}).
\end{equation}
and
\begin{equation}\label{canprojeq}
\varphi=\prod \varphi_i :\, \mathcal C_B\to \mathcal M_B.
\end{equation}
It is well known that $\varphi$ is, up to an automorphism of $\mathcal M_B$, the canonical projection from $\mathcal C_B$ to its semisimple quotient, with kernel the Jacobson radical $\mathfrak J_B\subset \mathcal C_B$ (see \cite[Lemma 2.3]{Bas},\cite[Theorem 2.3]{BIK1},\cite[Theorem 6]{HW}). Denote by $\mathcal N$ the product $\mathcal N=\prod_{i\in S_P} N(W_i)$ where $N(W_i)\subset \End_{\mathsf k}(W_i)$ are the nilpotent elements;  and by $\mathcal U=\prod_{i\in S_P} U_T(W_i)\subset \mathcal M_B$ the products of the subalgebras of strictly upper triangular elements $U_T(W_i)\subset\End_{\mathsf k}(W_i)$, in the ordered basis \eqref{Wieqn}. Since the Jacobson radical $\mathfrak J_B$ is already comprised of nilpotent elements of $\mathcal C_B$, it follows from \eqref{canprojeq} that the nilpotent commutator $\mathcal N_B$ satisfies 
\begin{equation}
\mathcal N_B=\varphi^{-1}\left( \mathcal N\right).
\end{equation}
We define
\begin{equation}\label{UBeq}
\mathcal U_B=\varphi^{-1} (\mathcal U).
\end{equation}
For $v\in V$ we denote by $< v,(u,i,k)>$ the coefficient of $v$ on $(u,i,k)$, when $v$ is written in the basis $\sf B$ of Definition \ref{basisdef}.
\begin{lemma} (\cite[Lemma 2.3]{Bas},\cite[Theorem 2.3B]{BIK1}).  Let  $C\in \mathcal N_B$. Then $C\in \mathcal U_B$ iff $C$ satisfies the following condition for all $i\in S_P$:
\begin{equation}\label{uppertrieq}
< C( (1,i,k))\,,  (1,i,k')> \,=0 \text { whenever }\, 1\le k'\le k\le n_i.
\end{equation}\noindent
Also, $\mathcal U_B$ is a maximal nilpotent \emph{subalgebra} of $\mathcal C_B$, and is isomorphic as a variety to an affine space.
\end{lemma}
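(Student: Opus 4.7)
The plan is to derive all three claims directly from the definition $\mathcal U_B = \varphi^{-1}(\mathcal U)$ together with standard properties of the canonical projection $\varphi: \mathcal C_B \to \mathcal M_B$ onto the semisimple quotient, whose kernel is the Jacobson radical $\mathfrak J_B \subset \mathcal C_B$.

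For the characterization \eqref{uppertrieq}, I would unpack $\varphi_i$ on a cyclic generator: since $\varphi_i(C) = \kappa_i \circ C|_{W_i}$, the $(k',k)$-entry of $\varphi_i(C)$ in the ordered basis $\{(1,i,k) : 1 \le k \le n_i\}$ of $W_i$ is precisely $\langle C(1,i,k),(1,i,k')\rangle$, because $\kappa_i$ extracts exactly those coordinates from $V = \bigoplus_j V_j$ in the basis $\sf B$ of Definition \ref{basisdef}. The requirement $\varphi_i(C) \in U_T(W_i)$ then translates, entry by entry, into the vanishing described in \eqref{uppertrieq}. Thus $C \in \varphi^{-1}(\mathcal U)$ if and only if \eqref{uppertrieq} holds for every $i \in S_P$.

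For the maximality assertion, I would use the fact that each $U_T(W_i)$ is a maximal nilpotent subalgebra of $\End_{\sf k}(W_i) \cong \Mat_{n_i}(\sf k)$ (a classical result), whence $\mathcal U = \prod_i U_T(W_i)$ is maximal nilpotent in the semisimple algebra $\mathcal M_B$. Since $\mathfrak J_B = \ker \varphi$ is itself a nilpotent ideal contained in $\mathcal U_B$, any nilpotent subalgebra $\mathcal V \subset \mathcal C_B$ strictly containing $\mathcal U_B$ would project to a nilpotent subalgebra $\varphi(\mathcal V)$ of $\mathcal M_B$ strictly containing $\mathcal U$ (since $\mathcal U_B = \varphi^{-1}(\mathcal U)$ means $\varphi$ identifies $\mathcal V/\mathcal U_B$ with $\varphi(\mathcal V)/\mathcal U$), contradicting maximality of $\mathcal U$.

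Finally, for the variety statement, the conditions \eqref{uppertrieq} are $\sf k$-linear in the entries of a matrix $C \in \mathcal C_B$, and $\mathcal C_B$ is itself a linear subspace of $\Mat_n(\sf k)$ cut out by the commutation relations $[C,B]=0$. Hence $\mathcal U_B$ is a $\sf k$-linear subspace of $\Mat_n(\sf k)$ and so, as a variety, is isomorphic to an affine space of the appropriate dimension. The only step where I expect to pause is in the maximality argument: one must be careful to check that $\varphi$ is surjective and sends $\mathcal U_B$ onto $\mathcal U$ (so the quotient interpretation is valid), which follows from the surjectivity of $\varphi$ and the containment $\mathfrak J_B \subset \mathcal U_B$.
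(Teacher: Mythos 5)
Your proof is correct and takes essentially the same route as the paper: condition \eqref{uppertrieq} is exactly strict upper triangularity of each $\varphi_i(C)$, and maximality of $\mathcal U_B$ is lifted through $\varphi$ (whose kernel $\mathfrak J_B\subset \mathcal U_B$ is nilpotent) from maximality of each $U_T(W_i)$ in $\End_{\sf k}(W_i)$. Your affine-space argument via linearity of the conditions \eqref{uppertrieq} inside the linear space $\mathcal C_B$ is a mild shortcut where the paper instead points to writing explicit (Toeplitz) coordinates, but it is the same in substance.
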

\begin{proof} The condition \eqref{uppertrieq} is equivalent to the strict 
upper triangularity of $\varphi_i  (C)$.  That $\mathcal U_B$ is a maximal nilpotent subalgebra of $\mathcal N_B$ follows from \eqref{UBeq}, and the fact that each $U_T(W_i)$  is a maximal nilpotent subalgebra of $\End_{\sf k}(W_i)$. 
 It is straightforward to write coordinates for $\mathcal U_B$ as an affine space, using \eqref{commutatoreq}, and the $B$-action from \eqref{Vsumeq},\eqref{Vbasiseq}.
\end{proof}

\subsection{The poset $\mathcal D_P$.}
 We stated earlier that $\mathcal D_P$ is the poset (or digraph) associated to the algebra $\mathcal U_B\subset \mathcal C_B$. That is, the  elements (or vertices) of $\mathcal D_P$ correspond 1-1 to the basis elements of $V$ from 
\eqref{Vsumeq}, \eqref{Vbasiseq}; two elements $b,b'$ in $\mathcal D_P$ satisfy
\begin{equation}\label{dpubeq}
 b< b' \Leftrightarrow  \exists \,A\in \mathcal U_B \text { such that } <A( b),b'>\not=0.
\end{equation} We now give a second definition of $\mathcal D_P$ by specifiying its diagram $\mathrm{Diag}(\mathcal D_P)$, comprised of the pairs $b< b'$ in $\mathcal D_P$ such that $b'$ \emph{covers} $b$ (there are no vertices $x, b<x<b'$; we will also say $b$ \emph{precedes} $b'$). We determine these pairs by their corresponding \emph{elementary maps} in $\mathcal U_B$ (see below). For the equivalence of the two definitions, see \cite[Theorem 2.5, (2.18), Remark 2.10]{BIK1}. For $i\in S_P$ we denote by $i^+=\min\{s\mid s\in S_P, s>i\}$ and $i^-=\max
\{s\mid s\in S_P,s<i\}$ the next largest and next smaller elements of $S_P$, respectively, when they exist.

\begin{definition}\label{DPdef}
\cite[Def. 2.9]{BIK1}. (Maps and poset $\mathcal D_P$ associated to $P$)
\begin{enumerate}[a.]
\item\label{DPdefa} Vertices of $\mathcal D_P$.
For each pair $(u,i)$ with $i\in S_P$ and $  1\le u\le i$, there are $n_i$ vertices  $\{ (u,i,k), 1\le k \le n_i\}$.  We visualize these as a vertical column parallel to the $z$-axis in 3-space where $(u,i,1)$ as the bottom vertex and $(u,i,n_i)$ is the top vertex of the column. \par
\item Elementary maps of $\End_{\sf k}(V)$. The maps defined below are zero on those basis elements of $V$ from \eqref{Vsumeq} and \eqref{Vbasiseq} not specifically listed.
\begin{enumerate}[i.]
\item for $i\in S_P\backslash p_t$, $\beta_i=\beta_{i,i^-}$ maps the vertex $(u,i,n_i)$ to $(u,i^-,1)$, whenever $ 1\le u\le i^-$.
\item for $i\in S_P\backslash p_t$, $\alpha_{i}=\alpha_{i^-,i}$ maps $(u,i^-,n_{i^-})$ to $ (u+i-i^-,i,1),$ whenever $ 1\le u\le i^-.$
\item  $e_{i,k}$ maps the vertex $(u,i,k)$ to $(u,i,k+1), 1\le u\le i,1\le k<n_i$.
\item When $i\in S_P$ is isolated (when neither $i-1\in S_P$ nor $ i+1\in S_P$),   the map $w_i$ sends $(u,i,n_i)$ to 
$(u+1,i,1)$ whenever   $1\le u<i$.
\end{enumerate}
\item\label{DPdefb} There is an edge $v\to v'$ in the diagram $\mathrm{Diag}(\mathcal D_P)$ iff $\exists$ an elementary map $\gamma$ such that $\gamma (v)=v'$. 
\end{enumerate} 
\end{definition}
\begin{example}
$\mathcal D_P$ for $P=(4,2,2,1)$. There are four rows, three levels $i=4,2,1.$ See 
Figure \ref{posetfigure}.
\end{example}
By giving maps corresponding to the edges of the diagram of $\mathcal D_P$ we have in effect defined  a large quiver $\mathfrak Q_P$ with identities (\cite[Definition 2.9]{BIK1}).
\begin{figure}[htb]
\begin{center}
\includegraphics[scale=0.63]{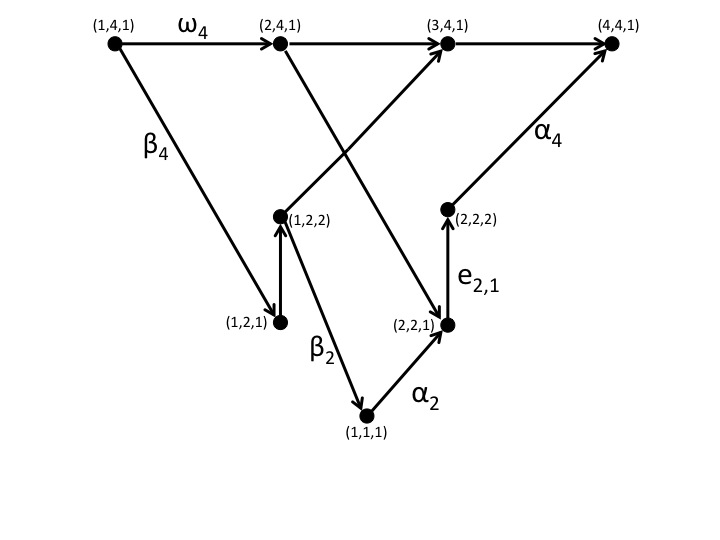}
\end{center}
\vskip -2.5cm
\caption{$\mathrm{Diag}(\mathcal D_P)$ and maps for $P=(4,2,2,1)$.}\label{posetfigure}
\end{figure}
 The $B$-action on $\mathcal D_P$ maps vertices one step to the right:
$B(u,i,k)=(u+1,i,k)$ if $u<i$ and is $0$ when $u=i$ (see \eqref{Vbasiseq}). Evidently, the elementary maps commute with the action of $B$. The $B$-orbit of an edge $v\to v'$ of the diagram is the set of edges $B^s v\to B^sv', s=1,2,\ldots $ for which $B^s\cdot v'\not=0$. Definition~\ref{DPdef}b above assigns a unique map to each maximal $B$-orbit of $\mathrm{Diag}(\mathcal D_P)$. Thus, there is a $1-1$ correspondence between the maximal $B$-orbits of edges of  $\mathrm{Diag}(\mathcal D_P)$ and the set of elementary maps. Also, if
$
(u,i,k) \text { precedes } (u',i',k')$ then either
\begin{equation*}  B( (u,i,k)) \text { precedes }B( (u',i',k')), \text { or }  B((u',i',k'))=0.
\end{equation*}
Likewise $v\le v' $ and $Bv'\not=0$ imply $Bv\le Bv'$.
 It follows from \eqref{UBeq} and Definition \ref{DPdef} that $\mathcal U_B\subset \mathfrak I (\mathcal D_P)$ is the subalgebra of the incidence algebra $\mathfrak I (\mathcal D_P)$ over $\sf k$ comprised of its  nilpotent elements satisfying $[A,B]=0$, or, equivalently, \eqref{commutatoreq}.  \par
The \emph{$i$-level} of $\mathcal D_P$ is the set of all vertices with second entry $i$: equivalently, the vertices of $\{\bigcup_{1\le k \le n_i}V_{i,k} \}$.
We denote by $\varrho (u,i,k)=\varrho (u,i)= 2u-i-1$ the integer giving the
relative position of a vertex with respect to the vertical center of symmetry of $\mathcal D_P$, determined by the involution $\tau $ of $\mathcal D_P$  (see \cite{BIK1} and \eqref{taueq} below). 
\begin{lemma} \cite[Theorem 2.13]{BIK1} Let $(u,i,k)\le (u',i',k') $ in $\mathcal D_P$. Then
\begin{align}\label{1eq}
&u\le u'\\
&i-u\ge  i'-u'\label{2eq}\\
& \varrho (u,i)+\mid i'-i\mid\,\  \le\, \varrho (u',i').\label{3eq}
\end{align}
\end{lemma}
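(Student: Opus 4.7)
The plan is to reduce the statement to two routine tasks: verifying the three inequalities on each cover relation of $\mathcal D_P$, and checking that each inequality is preserved under transitivity. Since $(u,i,k)\le(u',i',k')$ in $\mathcal D_P$ means by definition that there is a directed path in $\mathrm{Diag}(\mathcal D_P)$ from $(u,i,k)$ to $(u',i',k')$, these two reductions together yield the lemma.

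For the cover check I would use the fact that each edge of $\mathrm{Diag}(\mathcal D_P)$ is a $B$-translate of one of the four elementary generators $e_{i,k}$, $\beta_{i,i^-}$, $\alpha_{i^-,i}$, $w_i$ from Definition~\ref{DPdef}. Because $B$ acts by $(u,i,k)\mapsto(u+1,i,k)$, it shifts $u$ and $u'$ equally, leaves $i$ and $i'$ fixed, and so preserves each of the quantities $u'-u$, $(i-u)-(i'-u')$, and $\varrho(u',i')-\varrho(u,i)-|i'-i|$. Thus it suffices to test the four base maps. This comes down to a one-line arithmetic check per type; the key algebraic identities are $\varrho(u,i^-)-\varrho(u,i)=i-i^-$ for $\beta_{i,i^-}$ and $\varrho(u+i-i^-,i)-\varrho(u,i^-)=i-i^-$ for $\alpha_{i^-,i}$, which make (3) hold with equality on the level-changing edges, while (1) and (2) are immediate. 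For $e_{i,k}$ both vertices share $u$ and $i$, so all three inequalities are equalities. For the isolated-level map $w_i$ the $u$-coordinate increases by $1$, $\varrho$ increases by $2$, and $|i'-i|=0$, so (1)--(3) all hold strictly.

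For transitivity, suppose the three inequalities hold for $v\le v'$ and $v'\le v''$, with $v=(u,i,k)$, $v'=(u',i',k')$, $v''=(u'',i'',k'')$. Properties (1) and (2) chain by direct addition. For (3), summing the two instances yields
\[
\varrho(u,i)+|i'-i|+|i''-i'|\le\varrho(u'',i''),
\]
and the triangle inequality $|i''-i|\le|i'-i|+|i''-i'|$ closes the argument. I do not anticipate any genuine obstacle: the hard part is simply recognizing that the absolute value $|i'-i|$ in (3) is the right quantity because it satisfies the triangle inequality, making the bound compatible with composition of covers; once this is noticed, the proof reduces to the bookkeeping on the four generator types described above.
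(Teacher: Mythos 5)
Your proof is correct, but it follows a genuinely different route from the paper's. You work from the diagrammatic description of $\mathcal D_P$ (Definition~\ref{DPdef}): verify \eqref{1eq}--\eqref{3eq} on each of the four elementary edge types ($e_{i,k}$, $\beta_{i,i^-}$, $\alpha_{i^-,i}$, $w_i$) -- your arithmetic there is right, with \eqref{3eq} an equality on the level-changing edges -- and then close under composition, using the triangle inequality $|i''-i|\le |i'-i|+|i''-i'|$ to make \eqref{3eq} transitive. The paper instead argues directly from the algebraic definition \eqref{dpubeq}: \eqref{1eq} follows because $A$ commutes with $B$ and $(u,i,k)=B^{u-1}(1,i,k)$, \eqref{2eq} from $B^{i+1-u}A\bigl((u,i,k)\bigr)=A B^{i+1-u}\bigl((u,i,k)\bigr)=0$, and \eqref{3eq} is then a purely formal consequence of \eqref{1eq} and \eqref{2eq} (for $i'\ge i$ it is \eqref{2eq}, for $i'\le i$ it is \eqref{1eq}), so no triangle-inequality or edge case analysis is needed. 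The trade-off: the paper's argument is shorter and does not rely on knowing that $\le$ in $\mathcal D_P$ is the transitive closure of the elementary-map edges -- an equivalence the paper only cites from [BIK1] -- whereas your argument presupposes that equivalence but is self-contained combinatorics once it is granted, and it also records the sharper information of where equality holds edge by edge. You could streamline your write-up by noting, as the paper does, that \eqref{3eq} follows formally from \eqref{1eq} and \eqref{2eq}, which removes the need to treat it separately in both the cover check and the transitivity step.
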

\begin{proof} \eqref{1eq} is immediate from  \eqref{dpubeq} and $[A,B]=0$.
We have
\begin{equation*}
 B^{i+1-u}\cdot A( (u,i,k))= A\cdot B^{i+1-u}( (u,i,k))=0,
\end{equation*}
implying \eqref{2eq}.
 \eqref{3eq} follows from \eqref{1eq} and \eqref{2eq}.
\end{proof}\par
 We may write $(u,i)$ for $(u,i,1)\in \mathcal D_P$ when the multiplicity $n_i=1$. 
\begin{corollary}\label{bpowcor} Let $p$ be a chain from $v=(u,i,k)$ to $v'=(u',i',k')$ in $\mathcal D_P$. Then the total number $m(p,b)$ of
$\beta_b$ and $w_b$ edges in $p$ satisfies
\begin{equation}\label{bpoweq}
m(p,b)\le u'-u-\min\{|i-b |,| i'-b|\}
\end{equation}
if  $b<\min (i,i')$ or $b>\max (i,i')$; and it satisfies $m(p,b)\le u'-u$ otherwise.
\end{corollary}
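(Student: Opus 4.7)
The plan is to apply the three inequalities \eqref{1eq}--\eqref{3eq} of the preceding lemma to carefully chosen sub-chains of $p$, tracking the $u$-coordinates of the sources of the $\beta_b$ and $w_b$ edges. By Definition~\ref{DPdef}, every such edge has source a vertex of the form $(u_\ast, b, n_b)$. Listing the $m:=m(p,b)$ sources in the order they appear along $p$ as $v_{j_1}, v_{j_2}, \ldots, v_{j_m}$, the monotonicity \eqref{1eq} together with the fact that these vertices share the second and third coordinates $(b, n_b)$ forces $u_{j_1} < u_{j_2} < \cdots < u_{j_m}$.

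Next I would establish three types of lower bound for the $u$-gap. Applying \eqref{3eq} to the initial sub-chain $v \le v_{j_1}$ gives $u_{j_1} - u \ge (b-i)^+$, where $(x)^+ := \max(x,0)$. For each $1 \le s \le m-1$, apply \eqref{3eq} to the sub-chain $v_{j_s+1} \le v_{j_{s+1}}$, where $v_{j_s+1}$ is the target of the $s$-th $b$-edge: in the $w_b$-case one has $v_{j_s+1}=(u_{j_s}+1,b,1)$ and the bound $u_{j_{s+1}} - u_{j_s} \ge 1$ is immediate from \eqref{1eq}; in the $\beta_b$-case $v_{j_s+1}=(u_{j_s}, b^-, 1)$ and a short computation with \eqref{3eq} using $|b-b^-|=b-b^-$ yields the sharper bound $u_{j_{s+1}} - u_{j_s} \ge b - b^- \ge 1$. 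Applied analogously to the terminal sub-chain from $v_{j_m+1}$ to $v'$, these give $u' - u_{j_m} \ge (i'-b)^+ + 1$ if the final edge is $w_b$, and $u' - u_{j_m} \ge (i'-b^-)^+$ if it is $\beta_b$; in case $b < \min(i,i')$ the latter exceeds $(i'-b)+1$ because $(i'-b^-) = (i'-b) + (b-b^-)$ with $b-b^- \ge 1$.

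Summing the telescoping bounds yields $u' - u \ge (b-i)^+ + (m-1) + (\text{final contribution})$. Case analysis then produces the claim: in the first case $b < \min(i,i')$, the initial contribution vanishes and the final one is $\ge (i'-b)+1$, giving $m \le u'-u-(i'-b)$; since $i'-b \ge \min\{|i-b|, |i'-b|\}$ the stated bound follows. The symmetric case $b > \max(i,i')$ is handled analogously by the initial contribution $b-i$ together with a final contribution of at least $1$. In the remaining case $\min(i,i') \le b \le \max(i,i')$, at least one of $(b-i)^+, (i'-b)^+$ vanishes, and the telescoping yields $m \le u'-u$.

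The main obstacle is the endpoint bookkeeping, in particular verifying that in the $\beta_b$-final case the extra factor $b - b^- \ge 1$ coming from \eqref{3eq} applied to the sub-chain through the intermediate vertex at level $b^-$ supplies the $+1$ required to absorb the fact that $\beta_b$ itself contributes nothing to $u'-u$; the $w_b$-final case gets the same $+1$ directly from the unit increase of $u$ along the final edge. Distinctness of the $u_{j_s}$ is also crucial, and follows from the uniform source structure $(u_\ast, b, n_b)$ of all $\beta_b$ and $w_b$ edges combined with the non-decrease of $u$ along $p$.
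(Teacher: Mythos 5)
Your treatment of the principal case $b<\min(i,i')$ is correct and is essentially the paper's own argument in different bookkeeping: the paper (after reducing to a saturated chain) pairs each $\beta_b$ edge with the $\alpha_b$ edge that must follow it because the chain ends above level $b$, bounds $m(p,b)$ by $u_2-u_1$ where $(u_1,b,1)$ and $(u_2,b,n_b)$ are the first and last level-$b$ vertices, and then applies \eqref{1eq} and \eqref{2eq} at the two endpoints to get $m(p,b)\le u'-u-|i'-b|$; your telescoping over the sources $(u_{j_s},b,n_b)$ of the $\beta_b$ and $w_b$ edges, with initial gap $\max\{b-i,0\}$ and final gap $(i'-b)+1$, records exactly the same forced $u$-increments. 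This is also the only case the paper actually invokes later (the footnote in the proof of Proposition~\ref{2lem}), and there your proof is complete.

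The other two cases of your write-up contain a genuine gap. For $b>\max(i,i')$ you assert a ``final contribution of at least $1$,'' but when the last counted edge is a $\beta_b$ its target is $(u_{j_m},b^-,1)$ and \eqref{2eq} only gives $u'-u_{j_m}\ge \max\{i'-b^-,0\}$, which is $0$ because $i'<b$ and $i'\in S_P$ force $i'\le b^-$; your telescoping then yields only $m\le u'-u-|i-b|+1$. Likewise in the ``otherwise'' case both the initial and final contributions can vanish, so the telescoping gives only $m\le u'-u+1$, not $m\le u'-u$. This missing $1$ cannot be supplied by sharper bookkeeping, because for a chain ending with an unpaired $\beta_b$ the stated inequality itself fails as literally written: for $P=(b,b-1)$ with $b\ge 3$ the saturated chain $(1,b-1,1)\to(2,b,1)\to(2,b-1,1)$ has one $\beta_b$ edge while $u'-u-\min\{|i-b|,|i'-b|\}=0$, and the one-edge chain $(1,b,n_b)\to(1,b^-,1)$ has $m=1>u'-u=0$. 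The paper's own one-line ``likewise'' for the direction $b>\max(i,i')$ has the same blind spot, since its pairing of $\beta_b$ with $\alpha_b$ breaks down for a trailing $\beta_b$; the bound is safe only when the chain must re-ascend past level $b$, i.e.\ in the case you (and the paper) prove carefully. So: correct and faithful to the paper where the corollary is used, but the final-edge step in the remaining cases fails and would need the statement to exclude (or discount) a terminal $\beta_b$ edge.
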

\begin{proof} Assume $b<\min (i,i')$, and that $p$ is saturated. Then $\beta_b, \alpha_b$ edges are paired in $p$. Thus, if the chain first hits level $b$ at $(u_1,b,1)$ and last at $( u_2,b,n_b)$ we have $m(p,b)\le u_2-u_1$; from \eqref{1eq},\eqref{2eq} we have $u\le u_1, u_2+i'-b\le u' $ so $m(p.b)\le u'-u-|i'-b|$. Likewise for $b>\min(i,i')$ we have $m(p,b)\le u'-u-|i-b|$.
\end{proof}\par
 See \cite[Proposition 2.14]{BIK1} for a generallization specifying $\varrho(v')-\varrho(v)$ for $p$.
\subsection{The $U$-chains of $\mathcal D_P$.}
 
For $ S\subset S_P$ we denote by $\iota^{-1}(S)
$ the subpartition of $P$ comprised of all parts of $P$ having lengths in $S$. An \emph{ $s$-chain} of  a poset $\mathcal D$ is a union of $s$ chains of $\mathcal D$. T	he length of a chain is its number of vertices. 
The concept of $U$-chains of $\mathcal D_P$ is essentially due to
 P.~Oblak (``$B_k$ paths'' in
\cite{Obl1}, see also \cite[\S 3]{BIK1}).

\begin{definition}\label{Uchaindef} A \emph{simple $U$-chain} $U_a\subset\mathcal D_P$ is comprised of the following vertices, and edges in $\mathcal D_P$ between adjacent vertices:
\begin{enumerate}[i.]
\item all the vertices at levels $a,a-1$ of $\mathcal D_P$.
\item two \emph{hooks} above the $a$-level:\par the first is comprised of all vertices
$
(1,i,k) \mid i>a, 1\le k\le n_i;$\par
 the second is
comprised of all vertices $(i,i,k)\mid i>a, 1\le k\le n_i$.
\end{enumerate}
\end{definition}
\begin{note}\label{shellnote}
The simple chain $U_a$ in $\mathcal D_P$  is comprised of the $\mathcal D_P$ levels $a,a-1$ corresponding to an almost rectangular subpartition $P'=\iota^{-1}\{a,a-1\}$ of $P$, union the two hooks, one on the left from the source $(1,p_1,1)$ of $\mathcal D_P$ down to $P'$ and the other symmetrically located on the right from $P'$ up to the sink $(p_1,p_1,n_{p_1})$ of $\mathcal D_P$.  The length $| U_a |$ satisfies equation \eqref{obeq}. When $a$ is isolated in $S_P$ the simple $U$-chain $U_a$ in $\mathcal D_P$ is comprised of the chain at level $a$ of $\mathcal D_P$ union the two hooks. \par
 The diagram of $\mathcal D_P$ is comprised of the covering edges of $\mathcal D_P$ (Defnition \ref{DPdef}). We need an augmented diagram, whose role will become apparent after we define $s$-$U$-chains, in Lemma \ref{singletonlem}.
\begin{definition}\label{MCSdef} 
 A \emph{maximal consecutive subsequence} (MCS) of $S_P$ is one not properly contained in a larger consecutive subsequence. We denote by $S_P''$ the subset of $S_P$ comprised of minimum elements of all MCS having odd cardinality.
The \emph{augmented diagram}  
$
\mathrm{Diag}^{aug}(\mathcal D_P)\supset \mathrm{Diag}(\mathcal D_P)$
is the diagram  $\mathrm{Diag}(\mathcal D_P)$ supplemented by new edges 
$(u,\ell,n_\ell)\mapsto (u+1,\ell ,1)$
 for each pair $ (u,\ell ) $ such that $ 1\le u<\ell$ and  $\ell\in S_P''$, $\ell$ not isolated.

\end{definition}
An isolated $\ell \in S_P$ is the minimum of an MCS of length one in $S_P$: the corresponding edges are already in $ \mathrm{Diag}(\mathcal D_P)$.  
\par  Recall from \cite[Definition 2.15]{BIK1} the order reversing involution 
\begin{equation}\label{taueq}
\tau: \mathcal D_P\to \mathcal D_P,\,\, \tau (u,i,k)=(i+1-u,i,n_i+1-k).
\end{equation} 
A $U$-chain $U_a$ is evidently mapped to itself by $\tau$, the left hand hook mapping to the right hand hook.
\par
Let $C,C'$ be two disjoint $\tau$-symmetric chains of $\mathrm{Diag}^{aug}(\mathcal D_P)$, that are maximal with respect to the properties of being disjoint and symmetric. We say that $C'$ is \emph{inside} $C$ if for each row $(1,i,k)\le (2,i,k),\ldots \le (i,i,k)$ of $\mathcal D_P$ (so $i,k$ are fixed), all vertices of $U'$ in the row lie between the outside two vertices of $U$ in that row. A \emph{shelling} of a $\tau$-symmetric subset $\mathcal D$ of the vertices of $\mathcal D_P$ is a sequence of  $s$ disjoint $\tau$-symmetric chains $C_1,\ldots ,C_s$ of $\mathcal D_P$ whose union is $\mathcal D$ and such that $C_{i+1}$ is inside $C_i$ for $ i=1,\dots ,s-1$.
\end{note}
We now define $s$-$U$-chains of $\mathcal D_P$.

\begin{definition}\label{suchaindef}
{\bf A.} Let $\mathfrak A=(a_1,a_2,\ldots ,a_s)$ be an $s$-tuple of positive integers satisfying $a_i\in S_P$ and $a_i\ge a_{i+1}+2$ for $1\le i<s$. We define $\{\mathfrak A\}=\{a_1,a_1-1,a_2,a_2-1,\ldots ,a_s,a_s-1\}$.
\begin{enumerate}[a.]
\item We denote by $\{U_{\mathfrak A}\}$ the subset of vertices of $\mathcal D_P$ comprised of
\begin{enumerate}[i.]
\item all vertices in the levels of $\mathcal D_P$ given by $\iota^{-1}(\{\mathfrak A\}\cap S_P)$;
\item for each level $\ell >a_s \mid \ell \in S_P\backslash (\{\mathfrak A\}\cap S_P)$, all vertices 
\begin{align*}
&(u,\ell,k) \text { with } u\le \#\{ a_i<\ell\}, 1\le k\le n_\ell \text { (at the left of the $\ell$ level);}\\
& (u,\ell, k), \text { with } u\ge \ell+1- \#\{ a_i<\ell\}, 1\le k\le n_\ell \text { (at the right of the $\ell$ level.)}
\end{align*}
\end{enumerate}
\item We define the $s$-$U$-chain $U_{\mathfrak A}$ as the unique shelling of $\{U_{\mathfrak A}\}$ by a set of $s$ disjoint $\tau$-symmetric chains of $\mathcal D_P$. The first and outside chain in the shelling is the simple $U$-chain $U_{\mathfrak A,1}= U_{a_s}$ of $\mathcal D_P$. The $\Upsilon$-th component chain $U_{\mathfrak A,\Upsilon}$ -- counting from the outside of $\mathcal U_{\mathfrak A}$ -- has vertices 
\begin{align}\label{shelleq}
\{U_{\mathfrak A,\Upsilon}\}&=\{(u,\ell, k)\mid\ell\in \{a_{s+1-\Upsilon},a_{s+1-\Upsilon}-1\},\, \Upsilon -1\le u \le \ell+2-\Upsilon , 1\le k\le n_\ell\}\notag\\
\bigcup &\{(\Upsilon,\ell ,k), (\ell+1-\Upsilon,\ell, k) \text { with } \,\ell>a_{s+1-\Upsilon},\ell \in S_P\backslash \{\mathfrak A\}\cap S_P,1\le k\le n_\ell\}.
\end{align}

\end{enumerate}
{\bf B.} We denote by $|U_{\mathfrak A}|$ and $|U_{\mathfrak A,\Upsilon}|$ the lengths of the $s$-chain, and of the $\Upsilon$-th component chain, respectively. We denote by  $v_{\mathfrak A ,\Upsilon,j}, 1\le j\le |U_{\mathfrak A ,\Upsilon}|$ the $j$-th vertex of the chain $U_{\mathfrak A,\Upsilon}$: so its initial vertex is $v_\Upsilon=v_{\mathfrak A,\Upsilon,1}$.
Given $U_{\mathfrak A}$ and $v_{\mathfrak A ,\Upsilon,j}$ we will term the portion of  $U_{\mathfrak A,\Upsilon} $ from $v_\Upsilon$ to $v_{\mathfrak A ,\Upsilon,j}$ the \emph{standard} chain from $v_\Upsilon$ to $v_{\mathfrak A ,\Upsilon,j}$. \par\noindent
{\bf C}. We denote by $\langle U_{\mathfrak A} \rangle $ and $\langle U_{\mathfrak A,\Upsilon}\rangle$ the $\sf k$ span of the elements of $\sf B$ (vertices of $\mathcal D_P$) in $U_{\mathfrak A}  $ and $ U_{\mathfrak A,\Upsilon}$, respectively, and by  $\langle U_{\mathfrak A} \rangle _L $ and $\langle U_{\mathfrak A,\Upsilon}\rangle _L$ the  $L$ spans of the same elements when $L\supset \sf k$ is a field. \par\noindent
{\bf D}. We say that the $s$-$U$-chain is \emph{maximal} if it is not a proper subset of another $s$-$U$-chain (with the same $s$).
\end{definition}
\begin{remark}\label{shellrem}
 The chain $U_{\mathfrak A,\Upsilon}$ is made up of vertices in what is left of the set $\{U_{\mathfrak A}\}$, after removal of the previous $\Upsilon -1$ chains. It has an almost rectangular part as in the first line of equation \eqref{shelleq}; it has as well the two outside hooks of what is left above the $a_{s+1-\Upsilon}$ level of $\mathcal D_P$ (second line of \eqref{shelleq}. The chain $U_{\mathfrak A,\Upsilon}$ has initial vertex $v_\Upsilon =v_{\mathfrak A,\Upsilon,1}=(\Upsilon,p_1,1)$ and terminal vertex $\tau (v_\Upsilon)=v_{\mathfrak A,\Upsilon,|U_{\mathfrak A,\Upsilon}|}=(p_1+1-\Upsilon ,p_1,n_1)$.\par\noindent
\end{remark}
\begin{definition}\label{singlevdef}
We say that $U_{\mathfrak A ,\Upsilon}$ has a \emph{singleton level} if  $a_{s+1-\Upsilon}-1\notin S_P$: so its almost rectangular portion has only one level.
\end{definition}

 We will need the following characterization of the levels $\ell \in S_P$ that may occur as singleton levels in a $s$-$U$-chain of $\mathcal D_P$.
\begin{lemma}\label{singletonlem} Let $ U_{\mathfrak A}$ be a maximal $s$-$U$-chain.  If
$ U_{\mathfrak A,\Upsilon}$ has a singleton level then
$a_{s+1-\Upsilon}$ is the minimum of an odd length MCS of $ S_P$ included in $ \{ {\mathfrak A}\}$. Conversely, let $\ell$ be the minimum of a length $(2k+1)$ MCS of $S_P$: then the $(k+1)$-$U$-chain $U_{\mathfrak A}$ where $ \mathfrak A=(\ell+2k,\ell+2k-2,\ldots, \ell)$  has the singleton level $\ell$.
\end{lemma}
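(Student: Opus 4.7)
The plan is to handle the two implications separately. The converse is immediate from the construction $\mathfrak A=(\ell+2k,\ell+2k-2,\ldots,\ell+2,\ell)$: each entry $a_i=\ell+2(k+1-i)$ lies in the given MCS and hence in $S_P$; consecutive entries satisfy $a_i-a_{i+1}=2$, so the spacing condition of Definition \ref{suchaindef} is met; and $a_s=\ell$ with $\ell-1\notin S_P$ (because $\ell$ is the minimum of an MCS of $S_P$), so by Definition \ref{singlevdef} the chain $U_{\mathfrak A,1}$ has singleton level $\ell$.

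For the forward direction, set $a=a_{s+1-\Upsilon}$, so that $a-1\notin S_P$ by the singleton hypothesis and $a$ is the minimum of the maximal run of consecutive integers in $\{\mathfrak A\}\cap S_P$ that contains it. I would show this run has odd length by extending it upward two elements at a time. The structural fact driving the argument is that the pairs $\{a_j,a_j-1\}$ are pairwise disjoint (thanks to $a_i\geq a_{i+1}+2$) and each contributes to $\{\mathfrak A\}\cap S_P$ either the two consecutive integers $\{a_j,a_j-1\}$ (when $a_j-1\in S_P$) or the single integer $\{a_j\}$ (when $a_j-1\notin S_P$); in particular the pair for $a$ contributes only $\{a\}$. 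An element $a+1$ can lie in $\{\mathfrak A\}\cap S_P$ only as $a_j-1$ for some $j$, and the spacing condition forces $j=s-\Upsilon$ with $a_{s-\Upsilon}=a+2$; iterating this argument, the run extends from $\{a,a+1,\ldots,a+2i\}$ to $\{a,a+1,\ldots,a+2i+2\}$ precisely when $a_{s-\Upsilon-i}=a+2i+2$ and $a+2i+1\in S_P$. Induction on $i$ then concludes that the run has the form $\{a,a+1,\ldots,a+2k\}$ of odd length $2k+1$---an MCS of $S_P$ included in $\{\mathfrak A\}$ whose minimum is $a$.

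The main obstacle is the inductive bookkeeping, in particular verifying that no element of $\{\mathfrak A\}\cap S_P$ can extend the run except via the precise ``next pair at distance $2$'' pattern described; this is where the strict spacing $a_i\geq a_{i+1}+2$ is essential, ruling out any $a_j$ of the form $a+2i+1$ or positioned in the wrong interval. Maximality of $U_{\mathfrak A}$ supplies the ambient setting in which the lemma will be applied, but the combinatorial content of both directions depends only on the spacing condition on $\mathfrak A$ and the singleton hypothesis.
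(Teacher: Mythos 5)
Your converse direction is fine, and the paper itself offers no written proof of this lemma to compare against, so the question is whether your forward argument actually delivers the stated conclusion. It does not, and the gap is exactly at the point you dismiss: maximality of $U_{\mathfrak A}$. Your induction shows that the maximal run of consecutive integers of $\{\mathfrak A\}\cap S_P$ containing $a=a_{s+1-\Upsilon}$ has the form $\{a,a+1,\ldots,a+2i\}$, hence odd cardinality; that much indeed uses only the spacing $a_j\ge a_{j+1}+2$ and the singleton hypothesis. But the lemma asserts that $a$ is the minimum of an odd-length MCS \emph{of $S_P$} included in $\{\mathfrak A\}$, i.e.\ that your run is maximal as a consecutive subsequence of $S_P$; for that you must also rule out $a+2i+1\in S_P$, and your argument never addresses this. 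It cannot be done from the spacing condition alone, contrary to your closing claim: take $P=(4,3,2)$, so $S_P=\{2,3,4\}$, and $\mathfrak A=(2)$ --- the singleton hypothesis holds since $1\notin S_P$, your run is $\{2\}$, but the MCS of $S_P$ containing $2$ is $\{2,3,4\}$, which is not included in $\{\mathfrak A\}=\{1,2\}$; or take $P=(5,4,2)$ and $\mathfrak A=(4,2)$, where the component with $a_1=4$ has a singleton level but the MCS $\{4,5\}$ containing $4$ has even length. In both cases the lemma's conclusion fails, and what rescues the lemma is precisely that these chains are not maximal: $U_2\subsetneq U_3$ in the first case and $\{U_{(4,2)}\}\subsetneq\{U_{(5,2)}\}$ in the second.

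So the missing step is a maximality argument: if the run stops at $a+2i$ while $a+2i+1\in S_P$, then (since the run stopped) there is no entry of $\mathfrak A$ equal to $a+2i+2$, so the next entry above, if any, is at least $a+2i+3$; one must then show that modifying $\mathfrak A$ --- e.g.\ shifting the entries $a,a+2,\ldots,a+2i$ up to $a+1,a+3,\ldots,a+2i+1$, which are all in $S_P$ and still satisfy the spacing condition --- produces an $s$-$U$-chain whose vertex set strictly contains $\{U_{\mathfrak A}\}$ (the level $a+2i+1$, previously met only by the hooks, becomes fully covered), contradicting maximality. Until an argument of this kind is supplied, the forward direction is not proved; the rest of your inductive bookkeeping is sound and can be kept as the first half of the proof.
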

 \begin{definition}\label{lambdaUdef} We define the partition $\lambda_U(\mathcal D_P)$ from the $s$-$U$-chains of $\mathcal D_P$. For $1\le i\le r_P$ the $i$-th part of $\lambda_U(\mathcal D_P)$ is 
\begin{align}
(\lambda_U(\mathcal D_P))_i&= u_i(\mathcal D_P)-u_{i-1}(\mathcal D_P) \text { where } u_0(\mathcal D_P)=0 \text { and  for $i > 0$}
\notag\\
u_i(\mathcal D_P)&=\max \{\mid U_{\mathfrak A}\mid {\text{ such that}}\,
\, \mathfrak A \text { is an $i$-$U$-chain in } \mathcal D_P\} .
\end{align}
Although the component chains of the $s$-$U$-chains are disjoint, this  is otherwise analogous to the definition of $\lambda ({\mathcal D})$ in \eqref{lambdaDeq} from the sets of all chains of $\mathcal D_P$. 
\begin{example}\label{4221aex} The poset $\mathcal D_P$ for $P=(4,2,2,1)$ has $t=4$ rows, $\# S_P=3$ levels of which $\ell=4\in S_P''$ is isolated. The source is $(1,4)$ the sink is $(4,4)$.  The two simple $U$-chains of $\mathcal D_P$ are (see Figure \ref{posetfigure})
\begin{align*}
&(1,4)\le (2,4)\le (3,4)\le (4,4),\text {  and }\\
& (1,4)\le (1,2,1)\le (1,2,2)\le (1,1)\le ( 2,2,1)\le (2,2,2)\le (4,4). 
\end{align*} 
The $2$-$U$-chain $U_{\mathfrak U},\mathfrak U=(4,2)$ has a singleton level $\ell =4$.  Thus we have $\lambda_U(P)=(7,2)$, the first difference of $(u_0=0,u_1=7,u_2=9$).
\end{example}

\begin{example}\label{2.5ex} For $P=(5,4,3,3,2,1)$ the simple $U$-chains are $U_5, U_4,U_3,U_2$ of lengths $9,12,12,11$, respectively, according to \eqref{obeq}.  The $2$-$U$-chain $U_{(4,2)}$ (Figure \ref{mufigure}) has length 17 and is comprised of an outer chain
\small 
\begin{equation*}
U_{(4,2),1}= (1,5), (1,4),(1,3,1),(1,3,2),(1,2),(1,1),(2,2),(3,3,1),
(3,3,2),(4,4),(5,5).
\end{equation*}
\normalsize and the inner chain 
\small
\begin{equation*} 
U_{(4,2),2}=(2,5),(2,4),(2,3,1),(2,3,2),(3,4),(4,5).
\end{equation*}
\normalsize 
The other maximal $2$-$U$-chains are $U_{5,3}$ and $U_{5,2}$ of lengths $17$ 
and 16, respectively.  The unique $3$-$U$-chain $U_{(5,3,1)}$ has a singleton level $\ell=1$; it has the shelling \small
\begin{align*}
U_{(5,3,1),1}&=(1,5),(1,4),(1,3,1),(1,3,2),(1,2),(1,1),(2,2),(3,3,1),(3,3,2),(4,4),(5,5)\\
U_{(5,3,1),2}&=(2,5),(2,4),(2,3,1),(2,3,2),(3,4),(4,5),\text { and }\\
U_{(5,3,1),3}&=(3,5).
\end{align*}
\normalsize
Thus, $v_{(5,3,1),2,3}=(2,3,1)$, the third vertex of $U_{(5,3,1),2}$; and $v_{(5,3,1),2,6}=(4,5)$.\par
The partition $\lambda_U(P)=(12,5,1)$, the first differences of $ (u_0=0,u_1=12,u_2=17,u_3=18)$. Note that neither of the maximum-length simple $U$-chains $U_4,U_3$ is the first component $U_{(4,2),1}$ or $U_{(5,3),1}$ of a maximum-length $2$-$U$-chain!
\end{example}

\end{definition}
\subsection{The homomorphism $A_{\sf R}$.}\label{Asec}
We first define a polynomial ring $\sf R$ over $\sf k$, most of whose variables correspond $1-1$  to the maximal $B$-orbits of edges in the diagram of $\mathcal D_P$; then we will define a certain sparse matrix $A_{\sf R}\in \mathcal U_{B.\sf R}=\mathcal U_B\otimes_{\sf k}\sf R$.
We let
\begin{equation}\label{A0eq}
{\sf R= {k}}[ s _i,t_i,t_{j,k},z_\ell \mid i\in S_P\backslash p_t, j\in S_P,1\le k\le n_i, \ell \in {S_P}'' ].
\end{equation}

 Let
$\sf F$ be the quotient field of $\sf R$, $V_R=V\otimes _{\sf k}{\sf R},\,V_{\sf F}=V\otimes _{\sf k}{\sf F}$. We identify $\End_{\sf k}V$ with $\Mat_n{\sf k}$, $\End_{\sf R}V_{\sf R}$ with $\Mat_n{\sf R}$ and $\End_{\sf F}V_{\sf F}$ with $\Mat_n{\sf F}$ in the basis $\sf B$.
\begin{definition}\label{simplyadequatedef} We define the \emph{simply adequate} matrix $A_{\sf R}\in 
\End_{\sf R} V_{\sf R}=\Mat_n{\sf R}$ as
\begin{equation}\label{Aeq}
A_{\sf R}=\sum_{i\in S_P\backslash p_t}( s_i\beta_{i}+t_i\alpha_{i})+{\sum}'t_{i,k}e_{i,k}+{\sum_{\ell\in {S_P}''}}z_{\ell} w_{\ell}
\end{equation}
where $\sum'$ is the sum over couples $(i,k)$ with $ 1\le k<n_i, i\in S_P$. Here $\beta_i,\alpha_i$ and $ e_{ik} $ are the elementary endomorphisms of $V$ given in Definition \ref{DPdef}b; and $w_\ell$ is the endomorphism of $V$ taking $(u,\ell,n_\ell)$ to $(u+1,\ell ,1)$ for $ 1\le u<\ell$, which is elementary only when the MCS containing $\ell\in S_P'' $ is a singleton. 
\end{definition}
Equivalently, we have the following description of the entries of the matrix $A_{\sf R}$.
\begin{equation} 
(A_{\sf R})_{v,v'}=
\begin{cases} \!\!&\text { the variable of $\sf R$ determined by the map  $v\to v'$ when $v$ precedes $v'$};\\\!\!
&\text { the variable $z_i$ when $v=(u,\ell,n_\ell) $ and $v'=(u+1,\ell,1)$ and $\ell \in {S_P}''$};\\\!\!
& \,0 \text { otherwise}.
\end{cases}
\end{equation}
In particular the variables $z_\ell$ of the simply adequate $A_{\sf R}$ of \eqref{Aeq} correspond 1-1 to the  singleton levels in maximal $s$-$U$-chains $\mathfrak A$ of $\mathcal D_P$ (Lemma \ref{singletonlem}).
\begin{definition}\label{adequatedef}
 Let $L$ be a field containing $\sf k$. We call $A\in \mathcal U_B\otimes_{\sf k}L$ \emph{adequate} if there exist $s_i,t_i,t_{i,k}, z_\ell \in L\backslash 0$ for every $i\in S_P\backslash p_t$, every $k\in \{1,\ldots ,n_i\}$, and every $\ell \in S_P''$, such that, in the notation of \eqref{Aeq},
\begin{equation}\label{Aadeqeq}
 A=\sum_{i\in S_P\backslash p_t}( s_i\beta_{i}+t_i\alpha_{i})+{\sum}'t_{i,k}e_{i,k}+{\sum_{\ell\in {S_P}''}}z_{\ell} w_{\ell}.
\end{equation} 

\end{definition}
 In \cite{BIK1} we conjectured that if $A$ is adequate, then $P_A=Q(P)=\Ob (P)$. We will show the weaker result that if $\sf k$ is an infinite field and $A_{\sf R}$ is simply adequate then $P_{A_{\sf R}}\ge \lambda_U(\mathcal D_P)$, where $A_{\sf R}$ is considered as an element of $\Mat_{\sf F}(n)$ (Corollary~\ref{maincor}). We then show that there exists an adequate $A$ over $\sf k$ such that $P_A\ge \lambda_U(\mathcal D_P)$ (Theorem~\ref{mainthm}). The need for a hypothesis such as ``adequate'' is shown by
\cite[Example 3.17c]{BIK1}.\par
\begin{note}\label{saturatednote}
A chain in  $\mathrm{Diag}(\mathcal D_P)$ or  $\mathrm{Diag}^{aug}(\mathcal D_P)$ from vertex $v$ to vertex $v'$ is \emph{saturated} if it is not a proper subset of another chain from $v$ to $v'$ in $\mathrm{Diag}(\mathcal D_P)$ or  $\mathrm{Diag}^{aug}(\mathcal D_P)$, respectively. 
For $u\in \mathbb N$ the entry $(A_{\sf R}^u)      _{v,v'}$ of $A_{\sf R}^u$ is the projection $<A_{\sf R}^u( v), v'>$. It is a sum of terms, most of which are  monomials in $\sf R$ corresponding to a saturated chain in $\mathrm{Diag}(\mathcal D_P)$ from $v$ to $v'$. However, we have included extra variables
$z_\ell$, each corresponding to a map $w_\ell$ and to the $B$ orbit of an edge $(1,\ell, n_\ell)\to (2,\ell, 1)$ in $\mathrm{Diag}^{aug}(\mathcal D_P)\subset \mathcal D_P$, where $\ell\in S_P'',\,\ell$ not isolated. Thus, $(A_{\sf R}^u)      _{v,v'}$ includes monomials corresponding to chains from $v$ to $v'$ in $\mathrm{Diag}^{aug}(\mathcal D_P) $ (Lemma \ref{Apowerlemma}).
 We chose the simply adequate $A_{\sf R}$ -- a relatively sparse matrix -- in order to simplify a key step in our proof (see \eqref{pows1eq}ff of Proposition~\ref{2lem}): $A_{\sf R}$ has the mininum number of variables that we need for this step.  We could have worked directly with a generic $A'=A_{\sf R'}\in \mathcal U_{B,\sf R'}$ over a large ring $\mathsf R'$: for $v<v'\mid v,v'\in \mathcal D_P$ the entry $A'_{v,v'}$ is a variable of $\sf R'$ corresponding to the maximal $B$-orbit containing the interval $[v,v']$ in $\mathcal D_P$. Using the sparse matrix $A_{\sf R}$ leads to a more precise statement.
\end{note}

\subsection{The projection $\pi_{\mathfrak A}:\mathcal T_{\mathfrak A}\to\mathcal U_{\mathfrak A}$, and the matrix $M_{\mathfrak A}$.} We fix $A_{\sf R} \in \mathcal U_{B,\sf R}$ to be the simply adequate matrix of Definition \ref{simplyadequatedef}.   Let $U_{\mathfrak A}$ be an $s$-$U$--chain of $\mathcal D_P$. Recall that the initial vertices $v_\Upsilon$ of the $\Upsilon$-component chain $U_{\mathfrak A,\Upsilon}$ of $U_{\mathfrak A}$ satisfy $v_\Upsilon=v_{\mathfrak A,\Upsilon,1}=(\Upsilon,p_1,1)\in V, 1\le \Upsilon\le s.$  The matrix ${A_{\sf R}}^u$ has entries $({A_{\sf R}}^u)_{b,b'}$ on each pair $b,b'\in \sf B$ of basis vectors.
\begin{definition}\label{mudef} We associate to a chain $p$ in the augmented diagram $\mathrm{Diag}^{aug}(\mathcal D_P)$ the monomial $\mu_p$ obtained by multiplying the variables of $\sf R$ in \eqref{Aeq} that are  the coefficients for the elementary maps of \eqref{DPdef} and also those variables $z_\ell$ with $ \ell \in S_P"$ corresponding to the edges of  $p$. 
\end{definition}
\begin{lemma}\label{Apowerlemma} For $v<v'$ vertices of $\mathcal D_P$, the entry ${(A_{\sf R}}^u)_{v,v'}$ of the $u$-th power ${A_{\sf R}}^u$ is the sum of degree-$u$ monomials in $\sf R$,
\begin{equation}\label{sumchaineq}
({A_{\sf R}}^u)_{v,v'}={\sum_p}' \mu_p
\end{equation}
where the sum is over all chains $p$ of length $u+1$ from $v$ to $v'$ in $\mathrm{Diag}^{aug}(\mathcal D_P)$. 
\end{lemma}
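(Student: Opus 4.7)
The plan is to induct on $u$. For the base case $u=1$, the entry $(A_{\sf R})_{v,v'}$ is by Definition \ref{simplyadequatedef} the variable of $\sf R$ attached to the edge $v\to v'$ of $\mathrm{Diag}^{aug}(\mathcal D_P)$ when such an edge exists, and zero otherwise; this is precisely $\mu_p$ for the unique length-$2$ chain $p=(v,v')$, matching Definition \ref{mudef}.

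For the inductive step, expand the matrix product
\[
(A_{\sf R}^{u+1})_{v,v'}=\sum_{w\in \sf B}(A_{\sf R}^u)_{v,w}\cdot (A_{\sf R})_{w,v'}.
\]
The factor $(A_{\sf R})_{w,v'}$ vanishes unless $w\to v'$ is an edge of $\mathrm{Diag}^{aug}(\mathcal D_P)$. By the inductive hypothesis, $(A_{\sf R}^u)_{v,w}=\sum_p\mu_p$, summed over chains $p$ of length $u+1$ from $v$ to $w$ in the augmented diagram. The product $\mu_p\cdot (A_{\sf R})_{w,v'}$ equals $\mu_{p'}$, where $p'$ is the chain of length $u+2$ obtained by appending the edge $w\to v'$ to $p$; conversely, every length-$(u+2)$ chain from $v$ to $v'$ in $\mathrm{Diag}^{aug}(\mathcal D_P)$ arises uniquely in this way from its penultimate vertex $w$. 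Summing the contributions over all admissible $w$ yields \eqref{sumchaineq}.

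The one point that deserves verification --- and it poses no real obstacle --- is that every nonzero walk from $v$ to $v'$ produced by iterating the matrix product is automatically a chain in $\mathcal D_P$ (a strictly increasing sequence of vertices), so no extraneous terms enter the sum. This holds because every edge of $\mathrm{Diag}(\mathcal D_P)$ is strictly upward in the partial order by Definition \ref{DPdef} together with \eqref{dpubeq}, and each added edge $(u,\ell,n_\ell)\to (u+1,\ell,1)$ for $\ell\in S_P''$ not isolated likewise respects the order in $\mathcal D_P$, since the corresponding map $w_\ell$ belongs to $\mathcal U_B$ and \eqref{dpubeq} then places $(u,\ell,n_\ell)<(u+1,\ell,1)$. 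Consequently, the induction closes and the identity \eqref{sumchaineq} holds as stated, with $\mu_p$ being of degree $u$ since each of the $u$ edges of the length-$(u+1)$ chain contributes exactly one variable.
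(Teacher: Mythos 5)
Your proof is correct and is essentially the paper's argument made explicit: the paper simply cites the standard incidence-algebra fact that entries of matrix powers enumerate walks weighted by edge variables, and your induction on $u$ (with the observation that every nonzero entry of $A_{\sf R}$ is a single variable attached to an edge of $\mathrm{Diag}^{aug}(\mathcal D_P)$, so walks are automatically order-increasing chains) is precisely the standard proof of that fact.
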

\begin{proof} This is a standard result concerning the incidence algebra of
a poset. 
\end{proof}
\begin{example}\label{4221bex} Set $P=(4,2,2,1)$. (See Figure \ref{posetfigure} and Example~\ref{4221aex}.) The chain  $p: (1,4)\to (2,4)\to (3,4)\to (4,4)$ in $\mathcal D_P$ contributes the monomial $\mu_p={z_4}^3\in \sf R$ to the entry $({A_{\sf R}}^3)_{(1,4),(4,4)}$. The chain\par
$p'=(1,4)\to (1,2,1)\to (1,2,2)\to (1,1)\to ( 2,2,1)\to (2,2,2)\to (4,4) $ contributes the monomial $\mu_{p'}=\beta_4\cdot {e_{2,1}}^2\cdot \beta_2\cdot \alpha_2\cdot \alpha_4\in \sf R$ to the entry $({A_{\sf R}}^6)_{(1,4),(4,4)}$.
\end{example}
\begin{definition}[Projection $\pi_{\mathfrak A}$ from $\mathcal T_{\mathfrak A}$ to $\mathcal U_{\mathfrak A}$]\label{pidef}
\begin{enumerate}[A.]\item Denote by ${\sf F}[x]$ the polynomial ring in one variable. Let $U_{\mathfrak A} $ be an $s$-$U$-chain. For every $u,\Upsilon\in \mathbb Z$ with 
\begin{equation}\label{Teq}
  1\le \Upsilon\le s \text { and } 0\le u\le ( \mid U_{\mathfrak A,\Upsilon}\mid -1)
\end{equation} we put
\begin{equation}
T(U_{\mathfrak A})_{u,\Upsilon}=x^u\otimes_{\sf F}v_{\mathfrak A,\Upsilon,1}\in {\sf F}[x]\otimes_{\sf F}V_{\sf R}.
\end{equation}
We set 
\begin{equation}
T(U_{\mathfrak A})=\{ T(U_{\mathfrak A})_{u,\Upsilon}\mid 1\le \Upsilon\le s,\, 0\le u\le ( \mid U_{\mathfrak A,\Upsilon}\mid -1)\}.
\end{equation}
Denote by $\mathcal T_{\mathfrak A}=\langle T(U_{\mathfrak A})\rangle \subset {\sf F}[x] \otimes_{\sf F}V_{\sf F}$  and $\mathcal U_{\mathfrak A}=\langle\{ U_{\mathfrak A}\}\rangle \subset V_{\sf F} $ the respective $\sf F$-linear spans. 
\item\label{Mdef}
There is a natural homomorphism $\omega:{\sf F}[x]\otimes_{\sf F} V_{\sf F}\to V_{\sf F}$ 
\begin{equation}
 \omega (x^s\otimes_{\sf F}v)= A^s( v),
\end{equation} 
and, since $\{U_{\mathfrak A}\}$ is  a subset of the basis $\sf 
B$ for $V_{\sf F}$, a natural projection $\rho$ from $V_{\sf F}$ to the subspace $\mathcal U_{\mathfrak A}$. We denote by $\pi_{\mathfrak A}:\mathcal T_{\mathfrak A}\to \mathcal U_{\mathfrak A}$ the composition $\rho\circ\omega$. To define the matrix $M_{\mathfrak A}$ of $\pi_{\mathfrak A}$ we simply order the set  $\{U_{\mathfrak A}\}$ by  $v_{\mathfrak A,\Upsilon,j}<' v_{\mathfrak A,\Upsilon',j'}$ if $ \Upsilon< \Upsilon'$ or $\Upsilon=\Upsilon'$ and $j< j'$. We similarly order the set $T(U_{\mathfrak A})$ by setting $x^u\otimes_{\sf F} v_{\mathfrak A,\Upsilon,1}<' x^{u'}\otimes v_{\mathfrak A,\Upsilon',1}$ if $\Upsilon< \Upsilon'$  or $\Upsilon=\Upsilon'$ and $u< u'$. We denote by $M_{\mathfrak A}$ the $|U_{\mathfrak A}|\times |U_{\mathfrak A}|$ matrix of $\pi_{\mathfrak A}$ with respect to these ordered bases. That is, the entry of $M_{\mathfrak A}$ in the $x^u\otimes_{\sf F} v_{\mathfrak A,\Upsilon,1}$  row and the $v_{\mathfrak A,j,u'}$ column, with $1\le \Upsilon,j\le s$ and $ 0\le u< \mid  U_{\mathfrak A,\Upsilon}\mid,1\le u'< \mid  U_{\mathfrak A,j}\mid $ is
\begin{equation}\label{Mentryeq}
<A^u( v_{\mathfrak A,\Upsilon,1}), v_{\mathfrak A,j,u'}>.
\end{equation}
  (See Figure \ref{421fig} and Example \ref{421advex} for $M_{\mathfrak A}$ when $P=(4,2,1)$ and $\mathfrak A=(4,2)$.)
\item
We define the \emph{standard chain} 
\begin{equation}\label{standardchain}
p_{\mathfrak A,\Upsilon ,j}:\, v_{\mathfrak A,\Upsilon ,1}\to v_{\mathfrak A,\Upsilon ,2}\to \cdots \to v_{\mathfrak A,\Upsilon,j}
\end{equation} 
in $U_{\mathfrak A,\Upsilon}$ from the initial vertex $v_{\mathfrak A,\Upsilon,1}$ to $v_{\mathfrak A,\Upsilon,j}$.  We denote by $\mu_{\mathfrak A,\Upsilon,j}\in \sf R$ the monomial of degree $j-1$ in $\sf R$ arising as in Lemma~\ref{Apowerlemma}, from this standard chain.  We denote by $\mu_{\mathfrak A,\Upsilon}$ the monomial
\begin{equation}
\mu_{\mathfrak A,\Upsilon}=\prod_{1\le j\le \mid U_{\mathfrak A,i}\mid} \mu_{\mathfrak A,\Upsilon,j}.
\end{equation}
 The \emph{distinguished} monomial of $\det M_{\mathfrak A}$ for the $s$-$U$-chain $\mathfrak A$  is the product
\begin{equation}\label{mueq}
\mu_{\mathfrak A}=\prod_{1\le \Upsilon\le s} \mu_{\mathfrak A,\Upsilon}.
\end{equation}
\end{enumerate}
\end{definition}
\noindent
\begin{note}\label{Mentrynote} Evidently, the dimensions of the vector spaces $\mathcal T_{\mathfrak A}$ and $\mathcal U_{\mathfrak A}$ are the same. The degree of $\mu_{\mathfrak A,\Upsilon}$ satisfies
\begin{equation*}
\deg \mu_{\mathfrak A,\Upsilon}=\left( 1+2+\cdots +(\mid U_{\mathfrak A,\Upsilon}\mid -1)\right)= (\mid U_{\mathfrak A,\Upsilon}\mid -1)(\mid U_{\mathfrak A,\Upsilon}\mid)/2
\end{equation*}
The entry $<A^u( v_{\mathfrak A,\Upsilon,1}), v_{\mathfrak A,j,u'}>$ of $M_{\mathfrak A}$ is the sum of the monomials $\mu_p$ of $\sf R$ corresponding as in \eqref{sumchaineq} to length-$(u+1)$ chains $p$ from $v_{\mathfrak A,\Upsilon,1}$ to  $v_{\mathfrak A,j,u'}$ in $\mathrm{Diag}^{aug}(\mathcal D_P)$.\par
The distinguished monomial $\mu_{\mathfrak A}$ occurs in the main diagonal term of $\det M_{\mathfrak A}$.
For $\mathfrak A'\subset \mathfrak A$, $M_{\mathfrak A'}$ is a principal submatrix of $M_{\mathfrak A}$. For example, when $P=(4,2,1)$, $\mathfrak A'=(4), \mathfrak A=(4,2)$, 
 $M_{\mathfrak A'}$ is the leading $5\times 5$ principal submatrix of $M_{\mathfrak A}$ (Example~\ref{421advex}).
\end{note}
\section{Lower bound  for $Q(P)$.}\label{3sec}
The key steps in the proof of Theorem \ref{mainthm} involve an analysis of the sets of chains from the initial vertices of the $s$-$U$-chains to all the vertices of $U_\mathfrak A$. Each such set leads to a factorization of a monomial $\nu\in \sf R$ occuring in the expansion of the determinant $\det (M_{\mathfrak A})$. Using the sparseness of $A_{\sf R}$ -- that simplifies our work -- we show that there is a unique such factorization leading to the monomial $\mu_{\mathfrak A}$ of \eqref{mueq}  (Proposition~\ref{2lem} for $2$-chains and Theorem \ref{sthm} for $s$-chains). This shows that the Jordan block partition $P_{A_{\sf R}}$ dominates $ \lambda_U(\mathcal D_P)$ (Corollary \ref{maincor}).\par
The following result is well known. 
\begin{lemma}\label{ranklem} Let $V$ be an $n$-dimensional vector space over a field ${\sf F}$ and let $A$ be a nilpotent matrix in $\Mat_n({\sf F})=\End_{\sf F}V_{\sf F} $. The Jordan type 
$
Q=P_A=(q_1,\dots q_r)$, $ q_1\ge \cdots \ge q_r 
$ of $A$
satisfies
\begin{equation}\label{rankeq}
\forall i \in \{1,\ldots ,r\}, \quad\sum_{k=1}^i q_k=\max \{ \dim_{\sf F}\langle {\sf F}[A]\circ\{ v_1,\cdots v_i\}  \mid
v_1,\ldots ,v_i\in V.
\end{equation}
\end{lemma}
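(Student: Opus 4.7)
\textbf{Proof plan for Lemma~\ref{ranklem}.}
The plan is standard: establish both inequalities using the Jordan decomposition of $V$ under $A$ and the structure theorem for modules over the PID ${\sf F}[x]$ (via $x\mapsto A$). Write $V=\bigoplus_{k=1}^r V_k$ where $V_k$ is $A$-cyclic of dimension $q_k$, generated by some $e_k$, so that $V\cong\bigoplus_k {\sf F}[x]/(x^{q_k})$ as an ${\sf F}[x]$-module.

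For the \emph{lower bound}, I would simply take $v_k=e_k$ for $k=1,\dots,i$. Then ${\sf F}[A]\circ\{v_1,\dots,v_i\}=V_1\oplus\cdots\oplus V_i$ has dimension $\sum_{k=1}^i q_k$, so the maximum on the right-hand side of \eqref{rankeq} is at least $\sum_{k=1}^i q_k$.

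For the \emph{upper bound}, let $M={\sf F}[A]\circ\{v_1,\dots,v_i\}$ be arbitrary. Since $M$ is an ${\sf F}[A]$-submodule of the torsion ${\sf F}[x]$-module $V$, the structure theorem gives $M\cong\bigoplus_{k}{\sf F}[x]/(x^{\mu_k})$ with $\mu_1\ge\mu_2\ge\cdots$, and since $M$ is generated by at most $i$ elements there are at most $i$ nonzero parts. Setting $\mu^*_j=\#\{k:\mu_k\ge j\}$ and $q^*_j=\#\{k:q_k\ge j\}$, one has
\begin{equation*}
\dim_{\sf F}M=\sum_k \mu_k = \sum_{j\ge 1}\mu^*_j .
\end{equation*}

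The crux of the argument is the inequality $\mu^*_j\le\min(i,q^*_j)$ for every $j\ge 1$. The bound $\mu^*_j\le i$ is immediate from the generator count. For $\mu^*_j\le q^*_j$, I would note that in each cyclic summand ${\sf F}[x]/(x^{\mu_k})$ with $\mu_k\ge j$, the submodule $A^{j-1}\cdot{\sf F}[x]/(x^{\mu_k})$ is cyclic of length $\mu_k-j+1\ge 1$ and contributes exactly one dimension to $\ker A$; hence $\mu^*_j=\dim_{\sf F}\ker A|_{A^{j-1}M}$. Since $A^{j-1}M\subseteq A^{j-1}V$, one has $\ker A|_{A^{j-1}M}\subseteq\ker A|_{A^{j-1}V}$, and an identical computation on $V$ gives $\dim_{\sf F}\ker A|_{A^{j-1}V}=q^*_j$. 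This is the only step that requires genuine thought, and it is the main (mild) obstacle.

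To finish, I would interpret $\sum_{j\ge 1}\min(i,q^*_j)$ as the number of boxes in the Young diagram of $Q$ lying in the first $i$ rows (each column $j$ contributes $\min(i,q^*_j)$ such boxes), which equals $\sum_{k=1}^i q_k$. Combining, $\dim_{\sf F}M\le\sum_{k=1}^i q_k$, matching the lower bound and completing the proof of \eqref{rankeq}.
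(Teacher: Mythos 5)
Your proof is correct and follows essentially the same route as the paper: both obtain the lower bound by taking the cyclic generators from the Jordan (structure-theorem) decomposition $V\cong\bigoplus_k{\sf F}[x]/(x^{q_k})$. The only difference is that the paper dismisses the upper bound in one line as ``a consequence of the uniqueness of the Jordan partition,'' whereas you supply the missing detail via the conjugate-partition inequality $\mu^*_j\le\min(i,q^*_j)$, which is a valid (and welcome) elaboration of the same argument.
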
\noindent
\begin{proof} By the action of $A$ as $X$,  $V$ is a finitely generated torsion ${\sf k}[X]$-module, the direct sum of cyclic modules $V=\oplus_{k=1}^r{\sf F}[A]\circ v_i\cong \oplus_{k=1}^r
{\sf F}[X]/(X^{q_k})$ whose lengths correspond to the Jordan type of $A$. 
This provides a set of cyclic vectors $z_1,\ldots ,z_r$ satisfying, for each $i, 1\le i\le r, \dim_{\sf F}\langle {\sf F}[A]\circ \{z_1,\ldots ,z_i\}\rangle=\sum_{k=1}^i q_k$. That $\sum_{k=1}^i q_k$ is the maximum dimension of a subspace generated by $i$ vectors is a consequence of the uniqueness of the Jordan partition. 
\end{proof}

We now prepare to show that the monomial $\mu_{\mathfrak A}$  occurs only once in the expansion of  $\det M_{\mathfrak A}$, where $M_{\mathfrak A}$ is the $|U_{\mathfrak A}|\times |U_{\mathfrak A}|$ matrix of Definition \ref{pidef}\ref{Mdef}. There is a natural bijection  $\eta$ from the set of rows to the set of columns of $M_{\mathfrak A}$:
\begin{equation}
\eta : T(U_{\mathfrak A})\to \{U_{\mathfrak A}\} ;\, \eta (x^u\otimes_{\sf F} v_{\mathfrak A,\Upsilon ,1})= v_{\mathfrak A,\Upsilon ,u+1},  0\le u< |U_{\mathfrak A,\Upsilon}|, 1\le \Upsilon\le s.
\end{equation}\par
Here $\det M_{\mathfrak A}$ is the sum of $|U_{\mathfrak A}|!$ terms, one for each permutation $\sigma$ of $\{U_{\mathfrak A}\}$. The term corresponding to $\sigma $ is
\begin{equation}\label{detMeq} 
{\text{sgn}} (\sigma )\prod_{1\le \Upsilon\le s,\,0\le u< \mid U_{\mathfrak A,i} \mid}<A^u( v_{\mathfrak A,\Upsilon ,1}), \sigma (v_{\mathfrak A,\Upsilon ,u+1})>,
\end{equation} 
where the sign is that of $\sigma$.  Indeed, the entry in row $(u,i)$ and column $\sigma\circ \eta (u,i)$ of $M_{\mathfrak A}$ is the sum of monomials, one for each chain $c_{\Upsilon ,u}$ of length $u+1$ from $v_{\mathfrak A,\Upsilon ,1}$ to $\sigma(v_{\mathfrak A,\Upsilon ,u+1})$ in $\mathrm{Diag}^{aug}(\mathcal D_P)$. Consequently, the term of $\det M_{\mathfrak A}$ corresponding to $\sigma$ is the sum of signed monomials $\text{sgn}(\sigma)\nu$, one for each array $C_f$ of chains as in (ii) of Definition \ref{factornotation}.
\begin{definition}\label{factornotation} Let $U_{\mathfrak A}$ be an $s$-chain. A \emph{chain factorization} $f$ of a signed monomial $\pm\nu\in 
\sf R$ in the expansion of $\det M_{\mathfrak A}$, is a triple $f=(\nu_f, \sigma_f, C_f) $ where $ C_f=\{c_{\Upsilon ,u,f}\}$ is an array of chains, and $ \nu_f=\{\nu_{\Upsilon ,u,f}\}$ is an array of monomials, comprised of
\begin{enumerate}[(i)]
\item A choice of a permutation  $\sigma_f$ of  $\{ U_{\mathfrak A}\}$. This determines the map 
\begin{equation*}
\sigma_f\circ \eta: T(U_{\mathfrak A})\to \{ U_{\mathfrak A}\}.
\end{equation*}
\item  $C_f$: For each pair $(\Upsilon ,u), 1\le \Upsilon\le s,0\le u< |U_{\mathfrak A,\Upsilon }|,$ the choice of a chain $c_{\Upsilon ,u,f}$ of length $u+1$ from $v_{\mathfrak A,\Upsilon ,1}$ to $\sigma_f (v_{\mathfrak A,\Upsilon ,u+1})$ in $\mathrm{Diag}^{aug}(\mathcal D_P)$.
\item $\nu_f$: the array of monomials $\nu_{\Upsilon ,u,f}=\mu_{c_{\Upsilon ,u,f}}$, each the product of variables of $\sf R$ corresponding to the edges of $c_{\Upsilon ,u,f}$ (Definition \ref{mudef}).
 \end{enumerate}
\end{definition}
When $f=(\nu_f, \sigma_f, C_f)$ is a chain factorization of $\pm \nu$, then
\begin{equation}
\mid \nu\mid\, =\prod_{1\le \Upsilon\le s,\,\,0\le u<\mid U_{\mathfrak A,i}\mid} \nu_{\Upsilon ,u,f},
\end{equation}
and 
$\text{sgn} (\sigma_f )\cdot\nu$ is a signed monomial of $\sf R$ in the expansion of $\det (M_{\mathfrak A})$, before any cancellation.

We say that $C_f$ is a complete set of chains for $\mathfrak A$, and that  $f=(\nu_f,\sigma_f ,C_f)$ \emph{encodes} the chains $C_f$. We may omit subscripts on $\nu_f,\sigma_f,C_f$ when $f$ is clear.

\begin{note}\label{3.4note}
A complete set $C_f$ of chains for $\mathfrak A$ includes one chain to each vertex of $\{U_{\mathfrak A}\}$, but the chains may include vertices outside of $\{ U_{\mathfrak A}\}$: see Example~\ref{3.4ex}.\par
 Among the chain
factorizations is $g_{\mathfrak A}=(\nu_{\mathfrak A},e,C_{\mathfrak A})$ of the distinguished monomial $\mu_{\mathfrak A}$, given in Definition \ref{pidef}C,  where $\sigma= e$, the identity permutation, and $C_{\mathfrak A}$ is comprised of the standard chains as in \eqref{standardchain} from the initial vertex of each component chain of $U_{\mathfrak A}$ to every vertex of that chain.  \par
In principle, a monomial term  $\nu$ in the expansion of the determinant
$\det (M_{\mathfrak A})$ may equal $\mu_{\mathfrak A}$ even though the component chains encoded by the factorization of $\nu$  do not lie in $U_{\mathfrak A}$. This is so as there may be occurences of an $\alpha_k$ or $\beta_k$ at the same $ $ level as an edge
of $U_{\mathfrak A}$, but coming from an edge not in $U_{\mathfrak A}$ -- a result of the Toeplitz condition that $A_{\sf R}$ commutes with $B$.

\end{note}
\begin{example}\label{3.4ex}[Standard chains and monomials of $\det M_{\mathfrak A}$]. Let $P=(5,4,3,3,2,1)$ and $\mathfrak A=(4,2)$ (Figure \ref{mufigure}). Then $\mu_{\mathfrak A}= \mu_{\mathfrak A,1}\cdot \mu_{\mathfrak A,2}$, 
\begin{align*}
\mu_{\mathfrak A,1}&=s_5^{10}s_4^9t_{31}^{\,8}s_3^7s_2^6t_2^5t_3^4t_{31}^{\,3}t_4^2t_5\\
\mu_{\mathfrak A,2}&=s_5^5s_4^4t_{31}^{\, 3}t_4^2t_5.
\end{align*}
\begin{figure}[htb]
\begin{center}
\includegraphics[scale=.53]{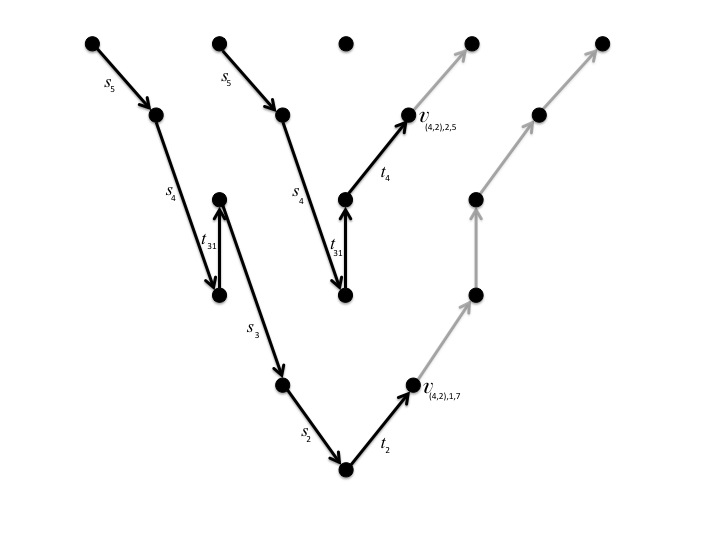}
\end{center}\vskip -2cm
\caption{$2$-$U$-Chain $U_{4,2}$ for $P=(5,4,3,3,2,1)$.} \label{mufigure}
\end{figure}
Recall from Example \ref{2.5ex} that $v_{\mathfrak A,1,7}=(2,2,1)=(2,2)$ and $v_{\mathfrak A,2,5}=(3,4)$ (see Figure \ref{mufigure}). The standard chains to these vertices are
\begin{align*} 
&(1,5)\to (1,4)\to (1,3,1)\to (1,3,2) \to (1,2)\to (1,1)\to (2,2)\\
& (2,5)\to (2,4)\to (2,3,1)\to (2,3,2)\to (3,4),
\end{align*}
respectively, corresponding to factors $\mu_{\mathfrak A,1,7}=s_5s_4t_{31}s_3s_2t_2$ and $\mu_{\mathfrak A,2,5}=s_5s_4t_{31}t_4$, respectively of the distinguished monomial $\mu_{\mathfrak A}$.
Let $\sigma = ( v_{{\mathfrak A},1,7},v_{{\mathfrak A},2,5})$, the transposition taking $(2,2)$ to $(3,4)$. There is a unique length $5$ chain $c$ from $v_{{\mathfrak A},2,1}=(2,5) $ to $(2,2)=\sigma (v_{{\mathfrak A},2,5}$), 
\begin{equation*}
c=(2,5)\to (2,4)\to (2,3,1)\to (2,3,2)\to (2,2),
\end{equation*}
an encoding (factorization) of the monomial $p_c=s_5s_4t_{3,1}s_3$. There are two length 7 chains from $v_{\mathfrak A,1,1}=(1,5) $ to $(3,4)=\sigma (v_{\mathfrak A,1,7})$, namely,
\begin{align*}
c_1=&(1,5)\to (1,4)\to (1,3,1)\to (1,3,2)\to (2,4)\to (3,5)\to (3,4) \text { and }\\
c_2=&(1,5)\to (1,4)\to (2,5)\to (2,4)\to (2,3,1)\to (2,3,2)\to (3,4),
\end{align*}
each encoding the monomial $p_{c_1}=s_5^2s_4t_{3,1}t_4t_5$. Let $\mu$ denote the monomial
\begin{equation*}
\mu= \mu_{\mathfrak A}\cdot (\mu_{\mathfrak A,1,7})^{-1}\cdot (\mu_{\mathfrak A,2,5})^{-1}\cdot (s_5s_4t_{3,1}s_3)\cdot (s_5^2s_4t_{3,1}t_4t_5)
\end{equation*}
obtained from $\mu_{\mathfrak A}$ by replacing the monomials
 $\mu_{\mathfrak A,1,7}$ and $\mu_{\mathfrak A,2,5}$  for the standard chains in $U_{\mathfrak A}$  to $(2,2)$ and $(3,4)$, by $p_{c}$ and $p_{ c_1}$, respectively: let $C_1,C_2$, respectively, denote the corresponding complete sets of chains.
Then $-\mu$ occurs twice in the $\sigma$ term of the expansion of $\det (M_{\mathfrak A})$. Once for the factorization
$f_1=(\nu_1,\sigma,C_1) $ and once for the factorization $f_2=(\nu_2,\sigma, C_2)$ where for $k=\{1,2\}$, $C_k$ is the array with $c_{14}=c$ and $c_{16}=c_k$ and all the other $c_{\Upsilon ,u,f_i}$ are the standard chains from $v_{\mathfrak A,\Upsilon , 1}$ to $\sigma (v_{\mathfrak A,\Upsilon , u+1})$ of length $u+1$.
\par  
Note that there is a second length $8$ chain $c'$ from $v_{\mathfrak A,1,1}=(1,5)$ to the vertex $v_{\mathfrak A,1,8}= (3,3,1)$ besides the standard one, namely 
\begin{equation*}
c'=(1,5) \to (1,4)\to (1,3,1)\to (1,3,2)\to (2,4)\to (3,5)\to (3,4)\to (3,3,1),
\end{equation*}
that contains the vertex $(3,5)$ not in $\{ U_{\mathfrak A}\}$.
 Thus, there are  other chain factorizations corresponding to the same transposition $\sigma$ above, that use the chain $c'$ to the vertex $(3,3,1)$. 
\end{example}
\par We begin the proof of our main results with the special case of $2$-$U$-chains to illustrate our method. Recall from \eqref{Aeq} that $s_i$ and $t_i$ are the coefficients of $A_{\sf R}$ on
$\beta_i$ and $\alpha_i$, respectively. 
\par Given a chain factorization $f=(\nu_f,\sigma_f, C_f)$  of the monomial $\nu$ we may write
\begin{equation}\label{nufacteq}
f=f_1\cdot f_2\cdots f_s, \quad \nu_f=\nu_{1,f}\cdot \nu_{2,f}\cdots \nu_{s.f},
\end{equation}
where $f_\Upsilon$ collects all elements of $C_f$ to vertices $v$ in $U_{\mathfrak A,\Upsilon}$. We similarly write $g_{\mathfrak A}=g_{\mathfrak A,1}\cdots g_{\mathfrak A,s}$ and $\mu_{\mathfrak A}=\mu_{\mathfrak A ,1}\cdots \mu_{\mathfrak A ,s}$ or $\mu_{\mathfrak A}=\mu_1\cdots \mu_s$ for short. Here $\sigma_f $ determines for each vertex $v=v_{\mathfrak A,\Upsilon ,u}$ in the $\Upsilon$ component chain (a vertex corresponds to a column of $M_{\mathfrak A}$), the corresponding row $\eta^{-1}\sigma_f^{-1}v={A_{\sf R}}^{u'}( v_{\mathfrak A, \Upsilon ',1})$ for a suitable power $u'$ and index $\Upsilon '$. The sub-factorization $f_{\Upsilon '}$ determines a length $u'$ chain in $\mathrm{Diag}^{aug}(\mathcal D_P)$ from  $v_{\mathfrak A, \Upsilon ',1}$ to $v$ and a corresponding  monomial factor of $\nu_\Upsilon$.  
\begin{proposition}\label{2lem} Let $U_{\mathfrak A}=U_{a,b}$ be a maximal 2-$U$-chain in the augmented diagram of $\mathcal D_P$ and suppose that $A_{\sf R}\in \Mat_n(\sf R)\cap \mathcal U_{B,\sf R}$ is  simply adequate (Definition \ref{simplyadequatedef}).  Let $f=(\nu_f,\sigma_f, C_f)$ be a $\mathfrak A$-factorization of a monomial $\nu=\pm\mu_{\mathfrak A}$. Then $f=g_{\mathfrak A}$, that is, $\sigma_f = id$ and every $c_{\Upsilon ,u,f}$ is the standard chain from $v_{\mathfrak A,\Upsilon ,1}$ to $v_{\mathfrak A,\Upsilon ,u+1}$. The coefficient of $\mu_{\mathfrak A}$ in $\det(M_{\mathfrak A})$ is $1$. 
\end{proposition}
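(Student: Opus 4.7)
The plan is to exploit two features of the setup. First, by Lemma \ref{Apowerlemma} each chain $c_{\Upsilon,u,f}$ has length $u+1$ and contributes to $\nu_f$ a monomial of degree exactly $u$, so the total degree $\sum u = \deg\mu_{\mathfrak A}$ is automatically saturated by $\nu_f$; and second, the matrix $A_{\sf R}$ is sparse, so each variable of $\sf R$ records a unique maximal $B$-orbit of edges in $\mathrm{Diag}^{aug}(\mathcal D_P)$. The plan is to decompose the equality $\nu_f = \pm\mu_{\mathfrak A}$ into a variable-by-variable exponent count, from which I expect both $\sigma_f = \mathrm{id}$ and each $c_{\Upsilon,u,f} = p_{\mathfrak A,\Upsilon,u+1}$ to follow.

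The first main step is to pin down $\sigma_f$. I would begin at the two pairs of hooks: the outer hooks of $U_{\mathfrak A,1}=U_b$ sit at columns $1$ and $p_1$ and cover levels $i>b$, while the inner hooks of $U_{\mathfrak A,2}$ sit at columns $2$ and $p_1-1$ and cover levels $i>a$. The multiplicity of each variable $s_i$, $t_i$ in the distinguished monomial $\mu_{\mathfrak A}$ is determined by how many of these four hook branches traverse the $B$-orbit of $\beta_i$ (resp.\ $\alpha_i$), and working outside-in -- starting with the outermost hook level $p_1^{-}$ and iterating downward toward level $a$ -- this count forces $\sigma_f$ to fix the vertices on every hook column. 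A parallel bookkeeping of the horizontal-edge variables $t_{i,k}$ and of the variables $s_a,t_a,s_{a-1},t_{a-1},s_b,t_b$ governing traversal of the almost rectangular levels, together with any $z_\ell$ coming from a singleton level of $U_{\mathfrak A,2}$, then forces $\sigma_f=\mathrm{id}$ on the remaining vertices.

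With $\sigma_f=\mathrm{id}$, each $c_{\Upsilon,u,f}$ must run from $v_{\mathfrak A,\Upsilon,1}$ to $v_{\mathfrak A,\Upsilon,u+1}$ in exactly $u$ edges. By Corollary \ref{bpowcor}, the standard chain $p_{\mathfrak A,\Upsilon,u+1}$ already realizes the minimum possible length, and any alternative chain of the same length would have to insert an $\alpha_i$--$\beta_i$ pair at some level $i$ or cross a $w_\ell$-edge with $\ell\in S_P''$. Either maneuver introduces a variable not present in $\mu_{\mathfrak A,\Upsilon,u+1}$, or else demands a compensating deficit at another position that the already-saturated degree budget cannot absorb without re-introducing the same extraneous variable elsewhere; in both cases the resulting monomial differs from $\mu_{\mathfrak A}$. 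This forces $c_{\Upsilon,u,f}=p_{\mathfrak A,\Upsilon,u+1}$ throughout, hence $f=g_{\mathfrak A}$, and the coefficient of $\mu_{\mathfrak A}$ in $\det(M_{\mathfrak A})$ is $\mathrm{sgn}(e)=+1$. The main obstacle is precisely the middle step: carrying out the outside-in bookkeeping that rules out non-trivial swaps of vertices between the outer and inner component chains, where the Toeplitz symmetry of $A_{\sf R}$ -- equality of entries along each $B$-orbit -- makes it delicate to certify that each variable's exponent is genuinely tied to one specific hook branch or one specific traversal of an almost rectangular row.
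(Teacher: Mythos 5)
There is a genuine gap here: the two places where you locate the difficulty are exactly the places you leave unproved, and the bookkeeping you sketch would not, as stated, close them. Your first step asserts that the exponent of each hook variable is ``tied to one specific hook branch,'' so that an outside-in count forces $\sigma_f$ to fix the hook vertices; but this localization is precisely what fails in general. Because of the Toeplitz condition each variable of $\sf R$ labels a whole $B$-orbit of edges, the chains encoded by $f$ may pass through vertices outside $\{U_{\mathfrak A}\}$ (Note \ref{3.4note}, Example \ref{3.4ex}), and $\nu_f$ is a product over all $|U_{\mathfrak A}|$ chains, so an excess of a variable in one factor can a priori be offset by a deficit in another. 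The same objection undoes your final step: the ``saturated degree budget'' only fixes the total degree of $\nu_f$, which is automatic from Lemma \ref{Apowerlemma}, and says nothing about the exponent of any single variable; it therefore cannot exclude a non-standard chain whose extraneous variables are compensated elsewhere in the product. Example \ref{3.4ex} exhibits concretely a transposition $\sigma$ and non-standard chains producing monomials very close to $\mu_{\mathfrak A}$; the whole content of the Proposition is that none of these equals $\pm\mu_{\mathfrak A}$, and your plan names this as the ``main obstacle'' without supplying a mechanism that rules it out.

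The missing mechanism in the paper's argument is a two-variable extremal trade-off tied to the specific pair $(a,b)$, not a level-by-level count over all variables. First one shows that $\mu_{\mathfrak A}$ realizes the minimum possible exponent of $s_{a-1}$ (of $s_a$ if $a$ is a singleton level), computed in \eqref{pows1eq}: chains to left-hook vertices are unique by \eqref{1eq}, and any chain to a vertex of $U_b$ at the $b,b-1$ levels or to a right-hook vertex below level $a-1$ must use a $\beta_{a-1}$ edge; the only slack is that $\kappa$ chains of $f_1$ to the top right-hook vertices could stay at or above level $a-1$, provided $\kappa$ chains of $f_2$ dip below to restore the count. Then one shows that, given this $s_{a-1}$-exponent, $\mu_{\mathfrak A}$ realizes the maximum possible exponent of $s_b$ (of $z_b$ if $b$ is a singleton level -- this is exactly why the $z_\ell$ were inserted, Lemma \ref{singletonlem}): by \eqref{2eq} (or \eqref{bpoweq}) a chain ending in the inner chain $U_{\mathfrak A,2}$ contributes at most $s_b^{\,b-2}$, while each standard right-hook chain contributes $s_b^{\,b-1}$, so each of the $\kappa$ swaps costs at least one power of $s_b$; hence $\kappa=0$, $s_b\nmid\nu_2$, and $\sigma_f(U_b)\subset U_b$. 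Only after this does the forced $s_b$-count pin each chain of $f_1$ through $(1,b,1)$ or $(b-1,b-1,1)$, i.e.\ make it standard, and the argument is then repeated inside $U_{\mathfrak A,2}$. Without this trade-off, or some substitute that controls exponents across the product rather than per chain, your proposal remains a statement of intent at the decisive point.
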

\begin{proof}
We will show that $g_\mathfrak A$ is the unique chain factorization $f=(\nu_f,\sigma_f,C_f)$ for a monomial $\nu$ of $\det(M_{\mathfrak A})$ such that $\nu$ has both the minimum possible multiplicity of $s_{a-1}$  (or of $s_a$ if $a$ is a singleton level of $S_P$), and the maximum possible multiplicity of $s_b$ (or of $z_b$ if $b$ is a singleton level of $S_P$). \par
Let the monomial $\nu=\pm\mu_{\mathfrak A}$ in the expansion of $\det M_{\mathfrak A}$ have 
chain factorization $f=(\nu_f,\sigma_f,C_f)$ as in Definition \ref{factornotation}. We write $\nu=\nu_1\cdot \nu_2 $ as in \eqref{nufacteq}.  We need to show that $f=g_{\mathfrak A}$.
We have $a\ge b+2$, so the almost rectangular levels of $\mathfrak A$ are
$(a,a-1,b,b-1)$ or, in the special case of a length-3 spread $(a,a-1,a-2)$, or also $(a,b,b-1), (a,a-1,b)$ when there is a singleton row. The component chains of $U_{\mathfrak A}$  are $U_b$ and $U_{\mathfrak A,2}$. \par
\vskip 0.25cm\noindent
{\bf Claim A}. $\sigma_f (U_b)\subset U_b$ and $s_b,z_b\nmid \nu_2$.
\vskip 0.25cm\noindent
{\emph{Proof of claim.}}
Assume first that $a$ is not a singleton level of $S_P$. The multiplicity of $s_{a-1}$ as a factor of $\mu_{\mathfrak A}$ and also of $|\nu|= \mu_{\mathfrak A}$ is
\begin{equation}\label{pows1eq}
  \left(bn_b+(b-1)n_{b-1}\right)+2\cdot\sum_{b<c\le a-2} n_c+\sum_{a-2<c} n_c,
\end{equation}
where the first summand comes from chains to vertices in the $b$ and $b-1$ levels of $U_b$, the middle from chains to vertices in the left hook and right hook that are below level $a-1$, and the right summand from chains to vertices in the top of the right hook of $U_b$. We will show that the sum  of all but the last term in \eqref{pows1eq} is a lower bound for the multiplicity of $s_{a-1}$ in any monomial of $\det (M_{\mathfrak A})$; and equality in \eqref{pows1eq} will greatly restrict the chain factorization $f$ of  $\nu=\pm \mu_{\mathfrak A}$. By \eqref{1eq} there is a unique saturated chain between two vertices $v=(0,i,a)$ and $v'=(0,i',a')$ at the extreme left of $\mathcal D_P$: there can be no return chain from a vertex $v''=(u'',i'',a'')$ with $u''>0$ to  $v'$. It follows that the chains encoded by $f$ to the left-hook vertices of $U_b$ are the same as those to the same vertices encoded by $g_{\mathfrak A}$. Since $A_{\sf R}$ is simply adequate the chain encoded by $f$ to each vertex  of the $b,b-1$ levels of $U_b$ must have at least one $\beta_{a-1}$ edge encoded by $s_{a-1}$ (see Note \ref{saturatednote}). \par Similarly for each vertex of the right hook of $U_b$, lying at level $a-2$ or below.  There are $\sum_{c\ge a-1}n_c$ vertices at the top  of the right hook of $U_b$ that might be reached by a chain encoded by $f_1$ lying entirely on or above the $a-1 $ levels. Suppose now that $\kappa\le \sum_{c\ge a-1}n_c$ chains encoded by $f_2$ to vertices of $U_{\mathfrak A,2}$ dip to the $a-2$ or lower level: each such chain contributes an $s_{a-1}$ factor for $\nu_2$. Then $\kappa$ chains of $f_1$ to the top vertices of the right hook of $U_b$ must lie entirely at or above the $a-1$ level, in order for the power of $s_{a-1}$ dividing $\nu$ to not exceed the value given by \eqref{pows1eq} for $\mu_{\mathfrak A}$.  \par
We now compare the factors $s_b^k$ encoded by $f$ and by $g_{\mathfrak A}$.
The multiplicity of $s_b$ as a factor of $\mu_{\mathfrak A}$ is the same as in $\mu_1$ and is
\begin{equation}\label{sbeq}
\left( b(b-1)/2\right)\cdot\left( n_b+n_{b-1}\right)+(b-1)\cdot\sum_{c>b}n_c.
\end{equation}
Here the first summand is from chains to vertices in the $b,b-1$ levels, and the second counts $b-1$ occurences of $s_b$ in each chain to a vertex of the right hook of $U_b$. We will see that the sum in \eqref{sbeq} is an upper bound for the multiplicity of $s_b$ as a factor of any monomial of $\det (M_{\mathfrak A})$ containing exactly the power of $s_{a-1}$ specified in \eqref{pows1eq}, so equality in  $\nu=\pm \mu_{\mathfrak A}$ will further restrict $f$. For each vertex of $U_b$, the power of $s_b$ encoded by a chain of $f_1$ to that vertex is no greater than the power of $s_b$ encoded by the standard chain of $g_{\mathfrak A,1}$ to the same vertex.
 The highest power of $s_b$ that a chain $p$ encoded by $f_2$ could contribute to $\nu_2$ is $s_b^{b-2}$, since $U_{\mathfrak A,2}$ lies to the left of the rightmost column $0:B$ of $\mathcal D_P$: by \eqref{2eq} there can be no chain from $(b-1,b-1,k)$ to
$(u,c,k')$ for $u<c$ when $b<c$.\footnote{Corollary \ref{bpowcor} and \eqref{bpoweq} give a sharper bound on the multiplicity of the edge $\beta_b$.} Thus, when one replaces $\kappa$ standard chains of $g_{\mathfrak A,1}$  to top vertices of the right hook of $U_b$ by $\kappa$ chains whose levels are entirely on or above the $a-1$ level in $f_2$, and makes up the missing $s_{a-1}^\kappa$ power in $\nu=\pm \mu$ by adding $\kappa$ chains of $f_2$ dipping to the $b-1$ level, one loses at least a total multiplicity of $\kappa$ for $s_b$ in $\nu$ in comparison to the multiplicity of $s_b$ in $\mu$ given by \eqref{sbeq}.  It follows that $\kappa=0$.
 \par Thus $s_b\nmid f_2$. If any component chain for $f_1$ began from $(2,a,1)=v_{\mathfrak A,2,1}$, it would contribute at least one less power of $s_b$ to $\nu_1$ than the the power contributed to $\mu_1$ by the standard chain from $(1,a,1)$ to the corresponding vertex encoded by $g_{\mathfrak A,1}$, again by \eqref{1eq}. Furthermore, there is no way to increase the total $s_b$ power by choosing different chains than the standard chains encoded by $g_1$ to the vertices of $U_b$. This implies that all chains encoded by $f_1$ begin from $(1,a,1)$: equivalently, $\sigma_f (U_b)\subset U_b$. \par
In the special case that the $a$ level is a singleton we replace $s_{a-1}$ by $s_a$ above. Since $\mathfrak A$ is maximal, the case of $b$ being a singleton level occurs only when $b$ is the minimum level of an odd-length MCS of $S_P$ (Lemma \ref{singletonlem}.) That we have included the added variables $z_\ell$ in \eqref{Aeq} allows us to carry out the second part of the argument,  replacing $s_b$ by
$z_b$.  and using \eqref{sbeq} as lower bound for the multiplicity of $z_b$ in $\nu$, when $b$ is a singleton level in $\mathfrak A$. This completes the proof of Claim A.
\vskip 0.25cm\noindent
{\bf Claim B.} The restriction of $\sigma_f $ to $ U_b$ is the identity, and $f_1=g_{\mathfrak A,1}$.\par\noindent
 The chains encoded by  $f_1 $ and $g_{\mathfrak A,1}$ agree for vertices of the left or right hook of $U_b$. On the left hook by uniqueness of the chains to vertices of the left hook; and on the right hook by the argument above requiring each such vertex $v$ to contribute $s_b^{b-1}$ to $\mu_1$ -- the only way to do so is for the chain to the vertex $v$ to pass through $(b-1,b-1,1)$: then it is the standard chain to $v$. Since $s_b\nmid \nu_2$ by Claim A,
in order for the power of $s_b$ given by \eqref{sbeq} to divide $\nu$, the
chain encoded by $f_1$ to each vertex $v=(u,b,k)$ must contribute $u-1$ and that to $v=(u,b-1,k)$ must contribute $u$ to the $s_b$ power of $\nu_1$: the only way to do so is for each such chain to pass through $(1,b,1)$, and to be the standard chain to $v$. This completes the proof of Claim~B.\vskip 0.25cm\noindent
{\bf Claim C}. The restriction of ${\sigma_f}$ to $ U_{\mathfrak A,2}$ is the identity, and $f_2=g_{\mathfrak A,2}$.\par\noindent
We have shown that $\sigma_f (U_{\mathfrak A,2})\subset  (U_{\mathfrak A,2})$. Since $\nu=\mu$ and $\nu_1=\mu_1$ we have $\nu_2=\mu_2$;  since the factorization of $\nu_1 $ is that of $\mu_1$, all chains contributing factors to  $\nu_2$ must start from $(2,a,1)$ and lie entirely within the chain $U_{\mathfrak A,2}$. It follows similarly to the proof of Claim B that $\sigma_f$ on $U_{\mathfrak A,2}$ is the identity, and $f_2=g_{\mathfrak A,2}$.\par
This completes the proof of the Proposition.
\end{proof}
\begin{example}\label{propex} Let $P$ have $n_i>0$ parts $i$ for $1\le i\le 5$ and consider the 2-$U$-chain $U_{\mathfrak A}=U_{4,2}$ of 
$\mathcal D_P$.  By \eqref{pows1eq} the power of $s_3$ dividing the $\mu_{\mathfrak A}$ term of $\det M_{\mathfrak A}$ is $(2n_2+n_1)+(n_5+n_4+n_3)$.  By \eqref{sbeq}  the power of $s_2$ dividing the $\mu_{\mathfrak A}$ term of $\det M_{\mathfrak A}$ is  $(n_2+n_1)+(n_5+n_4+n_3)$: this is the maximum power of $s_2$ possible for terms containing exactly the power of $s_3$ given by \eqref{pows1eq} and only $u_{\mathfrak A}$ attains this maximum. For $P=(5,4,3,3,2,1)$ of Example \ref{3.4ex} and Figure \ref{mufigure} these powers are $s_3^7$ and $s_2^6$.
\end{example}
\begin{theorem}\label{sthm}  Let $U_{\mathfrak A}$ be a maximal s-$U$-chain in the augmented diagram of $\mathcal D_P$ and suppose that $A_{\sf R}\in \Mat_n({\sf R})\cap\, \mathcal U_{B,\sf R}$ is  simply adequate.   Let $f=(\nu_f,\sigma_f, C_f)$ be a $\mathfrak A$-factorization of $\nu=\pm\mu_{\mathfrak A}$. Then $f=g_{\mathfrak A}$. The coefficient of $\mu_{\mathfrak A}$ in $\det(M_{\mathfrak A})$ is $1$. 
\end{theorem}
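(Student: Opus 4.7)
The plan is to extend the outer-to-inner peeling argument of Proposition \ref{2lem} by an induction on the component index $\Upsilon$, running from the outermost chain $U_{\mathfrak A,1}$ inward to $U_{\mathfrak A,s}$. At each stage I identify two ``marker'' variables: a \emph{ceiling} variable whose multiplicity in $\mu_{\mathfrak A}$ provides a tight lower bound over all chain factorizations of a monomial in $\det(M_{\mathfrak A})$, and a \emph{floor} variable whose multiplicity in $\mu_{\mathfrak A}$ provides a tight upper bound among those factorizations attaining the ceiling bound. Writing $\mathfrak A=(a_1,\dots,a_s)$ with $a_i\ge a_{i+1}+2$, the markers for the $\Upsilon$-th layer will be $s_{a_{s-\Upsilon}-1}$ (or $s_{a_{s-\Upsilon}}$, or a suitable $z$-variable, if that level is singleton in $S_P$) as ceiling, and $s_{a_{s-\Upsilon+1}}$ (or $z_{a_{s-\Upsilon+1}}$, if singleton) as floor. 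Since the $a_i$ are separated by at least two, these markers are distinct across layers, so that the ``trade'' between layers that occupied the core of the $s=2$ argument cannot be masked by cross-cancellation.

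Concretely, the base of the induction is $\Upsilon=1$: the outer chain $U_{\mathfrak A,1}=U_{a_s}$ is treated exactly as the outer chain in Proposition \ref{2lem}. Using \eqref{pows1eq} (with $(a-1)$ replaced by $a_{s-1}-1$) together with Corollary \ref{bpowcor} and \eqref{2eq}, any chain in a factorization $f$ that reaches a vertex below the ceiling level must contribute the ceiling variable, and any chain in $f_\Upsilon$ with $\Upsilon\ge 2$ that dips below the ceiling costs an extra copy of that variable. Coupling this with the bound \eqref{sbeq} on the $s_{a_s}$-multiplicity (which is sharp and attained by the standard chains in $g_{\mathfrak A,1}$) and invoking \eqref{1eq} to rule out chains starting from $v_{\mathfrak A,\Upsilon,1}$ with $\Upsilon\ge 2$ that land on $U_{a_s}$, we conclude $\sigma_f(U_{a_s})\subset U_{a_s}$ and, repeating Claims B and C of Proposition \ref{2lem}, that $f_1=g_{\mathfrak A,1}$.

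For the inductive step, assume we have shown $\sigma_f$ preserves each of $U_{\mathfrak A,1},\dots,U_{\mathfrak A,\Upsilon-1}$ and that $f_i=g_{\mathfrak A,i}$ for $i<\Upsilon$. Then the factors $\nu_{1,f}\cdots\nu_{\Upsilon-1,f}$ equal $\mu_{\mathfrak A,1}\cdots\mu_{\mathfrak A,\Upsilon-1}$, and since $\nu_f=\pm\mu_{\mathfrak A}$, the residual product $\nu_{\Upsilon,f}\cdots\nu_{s,f}$ must equal $\mu_{\mathfrak A,\Upsilon}\cdots\mu_{\mathfrak A,s}$. The structure of the shelling \eqref{shelleq} is such that $U_{\mathfrak A,\Upsilon}\cup\cdots\cup U_{\mathfrak A,s}$ plays, relative to the levels $\{a_i,a_i-1\mid i\le s-\Upsilon+1\}$ and the inner hook positions $u\in\{\Upsilon,\dots,p_1+1-\Upsilon\}$, exactly the role that $U_{\mathfrak A}$ played in the base case: in particular $U_{\mathfrak A,\Upsilon}$ is the outer chain of this residual configuration. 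The ceiling/floor marker analysis therefore repeats verbatim, yielding $\sigma_f(U_{\mathfrak A,\Upsilon})\subset U_{\mathfrak A,\Upsilon}$ and $f_\Upsilon=g_{\mathfrak A,\Upsilon}$. After $s$ steps we have $f=g_{\mathfrak A}$, which is the unique factorization producing $\mu_{\mathfrak A}$, and since $\sigma_{g_{\mathfrak A}}$ is the identity permutation the coefficient of $\mu_{\mathfrak A}$ in $\det(M_{\mathfrak A})$ is $+1$.

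The main obstacle will be the inductive bookkeeping: I need to show that once the outer $\Upsilon-1$ layers are pinned, the chains available to $f_\Upsilon,\dots,f_s$ really do land inside the ``residual'' sub-poset so that the base-case marker analysis applies. This amounts to verifying that any chain from $v_{\mathfrak A,\Upsilon',1}=(\Upsilon',p_1,1)$ to a vertex in $U_{\mathfrak A,\Upsilon'}$ with $\Upsilon'\ge \Upsilon$ cannot pass through a vertex of the already-fixed outer chains without either producing an extra power of an already-saturated ceiling variable or deleting a needed power of a floor variable; this is controlled by Corollary \ref{bpowcor} and by the inequalities \eqref{1eq}--\eqref{3eq}, together with the fact that simple adequacy forces every level-change to use a distinct variable. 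Once this trade-off ledger is in place, the peeling is automatic and the theorem follows.
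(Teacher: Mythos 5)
Your proposal takes essentially the same route as the paper: the paper also proves the theorem by peeling the outermost component chain $U_{a_s}$ with exactly your ceiling/floor multiplicity argument (ceiling variable $s_{a_{s-1}-1}$, floor variable $s_{a_s}$, with $s_a$ or $z$-variables substituted at singleton levels, as in Claims A and B of Proposition \ref{2lem}), showing $\sigma_f(U_{a_s})\subset U_{a_s}$ and $f_1=g_{\mathfrak A,1}$, and then inducting. The only difference is one of packaging: the paper inducts on $s$ over all partitions, passing after the first peel to the smaller poset $\mathcal D_{P'}$ with $n_i(P')=n_{i+2}(P)$ for $i\ge a_s-1$ (after checking that the residual chains use no edges below level $a_{s-1}-1$ nor in the hooks of $U_{a_s}$) --- which is precisely how it settles the ``residual bookkeeping'' you flag as your main obstacle --- whereas you iterate the marker analysis layer by layer inside $\mathcal D_P$.
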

\begin{proof} We will show this by induction on $s$. The case $s=1$ is essentially Claim~B of Proposition \ref{2lem}, and the case $s=2$ is Proposition \ref{2lem}. Suppose that the theorem is known for $s-1$ and all partitions $P$. Let the monomial $\nu=\pm\mu_{\mathfrak A}$ in the expansion of $\det M_{\mathfrak A}$ have 
chain factorization $f=(\nu_f,\sigma_f ,C_f)$ as in Definition \ref{factornotation}. Let $a=a_1, b=a_s$. We will show first that $\sigma_f (U_{b})\subset U_{b}$, then that the restriction $\sigma_f$ to $ U_{b}$ is the identity, and $f_1=g_{\mathfrak A,1}$. Then induction suffices to complete the result.\vskip  0.20 cm\noindent
{\bf Claim A.} $\sigma_f (U_b)\subset U_{b}$ and $s_b,z_b\nmid \nu_2$, \par\noindent
Assume first that $m=a_{s-1}$ is not a singleton of $S_P$, and consider the variable $s_{m-1}$. The multiplicity of $s_{m-1}$ in $\mu_{\mathfrak A}$ is
\begin{equation}\label{pows2eq}
 \left(bn_b+(b-1)n_{b-1}\right) +2\cdot\sum_{b<c\le m-2} n_c+\sum_{m-2<c} n_c.
\end{equation}
We will see that equality in \eqref{pows2eq} for $\nu_f=\pm\mu_{\mathfrak A}$ will greatly restrict $f$.
The chains encoded by $f$ to the left hook vertices of $U_b$ are the same as those encoded by $g_{\mathfrak A}$ to the same vertices. By the definition of $A_{\sf R}$ the chain encoded by $f_1$ to each vertex of $U_b$ at the $b,b-1$ levels must have at least one $\beta_m$ edge encoded by an $s_m$ factor. Likewise for the chain encoded by $f_1$ to any right hook vertex at level $m-2$ or below. However, $\kappa\le \sum_{c\ge m} n_c$ chains encoded by $f$ might lie entirely at level $m-1$ or above; the missing $s_{m-1}$ powers for these chains must be replaced by those in $\kappa$ chains encoded by $f_2,\ldots ,f_s$ to vertices in $U_{\mathfrak A}-U_b$. 
\par The multiplicity of $s_b$ as a factor of $\mu_{\mathfrak A}$ is given by \eqref{sbeq}. The right hook vertices of $U_b$ each contribute 
$ s_b^{b-1}$ to $\mu_1$. Any chain to a vertex of one of the top 
$s-1$ component chains $\mu_{\mathfrak A,i}, 2\le i\le s$ can contribute at most $s_b^{b-2}$ to $\nu$, by \eqref{2eq} or \eqref{bpoweq}.  As before for $s=2$, we conclude that $\kappa=0$, that $s_b\nmid \nu_2\cdots \nu_s$, and that $\sigma_f\mid U_b\subset U_b$. In the special case that the $a$-level is a singleton we replace $s_{a-1}$ by $s_a$ above; in case $U_b$ is a singleton level, we replace $s_b$ by $z_b$ in the above argument, as in the proof of Claim A of Proposition~\ref{2lem}.
This proves Claim A.\par
The same argument as in the proof of Proposition \ref{2lem} Claim B now shows that $\sigma_f{\mid U_b}$ is the identity, and $f_1=g_{\mathfrak A,1}$. We have also shown that $\sigma_f (U_{\mathfrak A}-U_b)\subset  U_{\mathfrak A}-U_b$, and that the portion of the factorization of $\nu$ coming from vertices in $\{U_{\mathfrak A}-U_b\}$ involves no edges of $\mathcal D_P$ below level $m-1$. Since these chains encoded by $f$ to vertices of $U_{\mathfrak A}-U_b$ start and end in $U_{\mathfrak A}-U_b$ they don't involve edges in the hooks of $U_b$.  \par
Now peel the chain  $U_b=U_{\mathfrak A,1}$ from $U_{\mathfrak A}$ to form $U_{\mathfrak A'}, \mathfrak A'=(a_1,\ldots ,a_{s-1})$, and regard its image   $s-1$ chain $U'$ with label ${\mathfrak A}'_{P'}=(a_1-2,\ldots ,a_{s-1}-2)$ in $\mathcal D_{P'}$, where $P'$ is obtained from $P$ by  peeling off $U_b$ and ommitting parts below $U_b$:
\begin{equation*}
 n_i(P')=\begin{cases}&n_{i+2}(P) \text { for $i\ge b-1$}\\
&0 \text { for $i\le b-2$}.
\end{cases} 
\end{equation*}
Since $U_b=U_{\mathfrak A,1}$ was an outside chain, $\mathcal D_{P'}\subset \mathcal D_P$.
The induction step applied to $U'$ and $\mathcal D_{P'}$ now shows that the portion $f_2\cdot f_3\cdots f_s$ of the factorization $f$ corresponding to vertices of $U_{\mathfrak A}-U_b$ agrees with the factorization $g_2\cdot g_3\cdots g_s$ of the corresponding portion of $g=g_{\mathfrak A}$. Putting this together with  $f_1=g_{\mathfrak A,1}$ we conclude that 
$f=g_{\mathfrak A}$. This completes the proof of the  induction step and the Theorem.
\end{proof}
\begin{corollary}\label{maincor} Let $\sf k$ be an infinite field, let ${\sf R}$ be the polynomial ring of \eqref{Aeq}, and suppose that $A_{\sf R}\in \End_{\sf R}V_{\sf F}$ is the simply adequate element of $\,\mathcal U_{B,\sf R}$. Then the Jordan partition $P_{A_{\sf R}}$ over the quotient field $\sf F$ of $\sf R$ satisfies,  $P_{A_{\sf R}}\ge \lambda_U(\mathcal D_P)$.
\end{corollary}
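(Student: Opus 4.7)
The plan is to derive the corollary essentially as a consequence of Theorem~\ref{sthm} together with the rank characterization in Lemma~\ref{ranklem}. Recall that the dominance inequality $P_{A_{\sf R}}\ge \lambda_U(\mathcal D_P)$ amounts to showing that for every $s\in\{1,\ldots ,r_P\}$,
\[
\sum_{k=1}^s (P_{A_{\sf R}})_k \;\ge\; \sum_{k=1}^s (\lambda_U(\mathcal D_P))_k \;=\; u_s(\mathcal D_P),
\]
so by Lemma~\ref{ranklem} it suffices, for each $s$, to exhibit vectors $v_1,\ldots ,v_s\in V_{\sf F}$ with
\[
\dim_{\sf F} \langle {\sf F}[A_{\sf R}]\circ\{v_1,\ldots ,v_s\}\rangle \;\ge\; u_s(\mathcal D_P).
\]

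First I would fix $s\le r_P$ and choose a maximal $s$-$U$-chain $U_{\mathfrak A}$ of $\mathcal D_P$ attaining $|U_{\mathfrak A}|=u_s(\mathcal D_P)$, and then take $v_\Upsilon=v_{\mathfrak A,\Upsilon,1}$ for $\Upsilon=1,\ldots ,s$, as in Definition~\ref{pidef}. The next step is to invoke Theorem~\ref{sthm}: the distinguished monomial $\mu_{\mathfrak A}$ appears with coefficient $1$ in $\det(M_{\mathfrak A})$, so in particular $\det(M_{\mathfrak A})\ne 0$ as an element of $\sf R$, hence also as an element of the quotient field $\sf F$. Thus the $\sf F$-linear map $\pi_{\mathfrak A}=\rho\circ\omega:\mathcal T_{\mathfrak A}\to\mathcal U_{\mathfrak A}$ is an isomorphism between two $\sf F$-vector spaces of dimension $|U_{\mathfrak A}|$.

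From the surjectivity of $\pi_{\mathfrak A}=\rho\circ\omega$, together with the fact that $\rho$ is a projection, I would deduce $\dim_{\sf F}\omega(\mathcal T_{\mathfrak A})\ge \dim_{\sf F}\mathcal U_{\mathfrak A}=|U_{\mathfrak A}|$. But $\omega(\mathcal T_{\mathfrak A})$ is spanned by the vectors $A_{\sf R}^u(v_\Upsilon)$ for $1\le \Upsilon\le s$ and $0\le u\le |U_{\mathfrak A,\Upsilon}|-1$, so $\omega(\mathcal T_{\mathfrak A})\subset \langle{\sf F}[A_{\sf R}]\circ\{v_1,\ldots ,v_s\}\rangle$. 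Combining these gives
\[
\dim_{\sf F} \langle {\sf F}[A_{\sf R}]\circ\{v_1,\ldots ,v_s\}\rangle \;\ge\; |U_{\mathfrak A}|\;=\;u_s(\mathcal D_P).
\]

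Finally, applying Lemma~\ref{ranklem} to the nilpotent matrix $A_{\sf R}\in \Mat_n({\sf F})$ (nilpotency being inherited from $A_{\sf R}\in\mathcal N_{B,{\sf R}}$) yields $\sum_{k=1}^s (P_{A_{\sf R}})_k\ge u_s(\mathcal D_P)$ for every $s\le r_P$, which is exactly the dominance condition $P_{A_{\sf R}}\ge \lambda_U(\mathcal D_P)$. The only substantive input is the nonvanishing of $\det(M_{\mathfrak A})$ supplied by Theorem~\ref{sthm}; the rest is a direct dimension-counting argument using the rank-based Jordan type characterization of Lemma~\ref{ranklem}.
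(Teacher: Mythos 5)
Your proposal is correct and follows essentially the same route as the paper: Theorem~\ref{sthm} gives $\det(M_{\mathfrak A})\neq 0$, hence $\pi_{\mathfrak A}$ has maximal rank $|U_{\mathfrak A(s)}|$ for a maximum-length (hence maximal) $s$-$U$-chain, and Lemma~\ref{ranklem} converts this into the dominance inequality. The extra details you supply (surjectivity of $\pi_{\mathfrak A}$ forcing $\dim_{\sf F}\omega(\mathcal T_{\mathfrak A})\ge |U_{\mathfrak A}|$ and the containment $\omega(\mathcal T_{\mathfrak A})\subset\langle{\sf F}[A_{\sf R}]\circ\{v_1,\ldots ,v_s\}\rangle$) are exactly what the paper leaves implicit.
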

\begin{proof}
 Let $A_{\sf R}\in\mathcal U_{B,\sf F}$ be simply adequate.
Let $U_{\mathfrak A (s)}$ be a maximum-length $s$ $U$-chain of $\mathcal D_P, 1\le s\le r_P$. Theorem \ref{sthm} shows that for each $s$, the projections $\pi_{\mathfrak A (s)}:\mathcal T_{\mathfrak A (s), A}\to \mathcal U_{\mathfrak A (s)}$  have the maximum possible rank $\mid U_{\mathfrak A (s)}\mid$. By Lemma~\ref{ranklem} the partition $P_{A_{\sf R}}$ has first $s$ parts summing to at least $\mid U_{\mathfrak A (s)}\mid $. By Definition \ref{lambdaUdef} and \eqref{orbitclosureeq} this implies that over the quotient field $\sf F$ we have $P_{A_{\sf R}}\ge  \lambda_U(\mathcal D_P)$.
\end{proof}
\begin{theorem}\label{mainthm} Let $\sf k$ be an infinite field. Then
$Q(P)\ge \lambda_U(\mathcal D_P)$. In particular, there is an adequate $A\in \mathcal U_B$ over $\sf k$ satisfying $P_A\ge \lambda_U(\mathcal D_P)$.
\end{theorem}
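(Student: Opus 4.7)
The plan is to deduce Theorem \ref{mainthm} from Corollary \ref{maincor} by a generic-point specialization that exploits the infinitude of $\sf k$. First, note that the proof of Corollary \ref{maincor}, via Theorem \ref{sthm}, gives more than its bare statement records: for each $s\in\{1,\ldots,r_P\}$ and each maximum-length $s$-$U$-chain $\mathfrak A(s)$ in $\mathcal D_P$, the distinguished monomial $\mu_{\mathfrak A(s)}$ occurs in the expansion of $\det(M_{\mathfrak A(s)})$ with coefficient exactly $1$ and no cancellation. In particular, $\det(M_{\mathfrak A(s)})$ is a nonzero element of the integral domain $\sf R$.

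Next, I would form the single polynomial
\begin{equation*}
D \;=\; \Bigl(\prod_{s=1}^{r_P} \det(M_{\mathfrak A(s)})\Bigr)\cdot \Bigl(\prod_{v} v\Bigr)\;\in\;\sf R,
\end{equation*}
where $v$ ranges over all variables $s_i, t_i, t_{i,k}, z_\ell$ appearing in \eqref{A0eq}. Each factor is nonzero in $\sf R$, so $D$ is a nonzero polynomial. Because $\sf k$ is infinite, there exists a point $\mathbf p\in {\sf k}^N$ (with $N$ the total number of variables) at which $D(\mathbf p)\neq 0$. Substituting $\mathbf p$ into $A_{\sf R}$ produces a matrix $A\in \mathcal U_B$ defined over $\sf k$ for which (i) every coefficient appearing in the expression \eqref{Aadeqeq} of $A$ is nonzero, so $A$ is adequate in the sense of Definition \ref{adequatedef}; and (ii) for every $s$, the specialized matrix $M_{\mathfrak A(s)}(\mathbf p)$ still has nonzero determinant, hence full rank $|U_{\mathfrak A(s)}|$.

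Condition (ii) says that the $\sf k$-linear projection $\pi_{\mathfrak A(s)}$ built from the specialized $A$ is an isomorphism, so the ${\sf k}[A]$-submodule generated by the $s$ initial vertices $v_1,\ldots,v_s$ of the component chains of $U_{\mathfrak A(s)}$ has dimension at least $|U_{\mathfrak A(s)}|$. Applying Lemma \ref{ranklem} to these vectors for each $s$, the Jordan partition $P_A=(q_1,\ldots,q_r)$ satisfies $\sum_{k=1}^{s}q_k\ge |U_{\mathfrak A(s)}|$; combined with Definition \ref{lambdaUdef} and the dominance order \eqref{orbitclosureeq}, this gives $P_A\ge \lambda_U(\mathcal D_P)$. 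Since $A\in \mathcal U_B\subset \mathcal N_B$ and $Q(P)$ dominates the Jordan type of every element of $\mathcal N_B$, we conclude $Q(P)\ge P_A\ge \lambda_U(\mathcal D_P)$, which proves both assertions of the theorem.

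The main hurdle is not really an obstacle at this stage: Theorem \ref{sthm} has already performed the substantive combinatorial work of showing that the determinants are nonzero as polynomials in $\sf R$. What remains is the standard observation that over an infinite field one may avoid a finite union of proper Zariski-closed hypersurfaces in ${\sf k}^N$, producing a simultaneous specialization that preserves both adequacy and the full-rank conditions for every $s\le r_P$.
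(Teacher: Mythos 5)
Your proposal is correct and follows essentially the same route as the paper: both use Theorem \ref{sthm} to see that each $\det(M_{\mathfrak A(s)})$ is a nonzero polynomial in $\sf R$ and then use the infinitude of $\sf k$ to choose a specialization of the variables making all these determinants (and hence the ranks of the $\pi_{\mathfrak A(s)}$) survive, concluding via Lemma \ref{ranklem} and the genericity/irreducibility of $\mathcal N_B$ that $Q(P)\ge P_A\ge\lambda_U(\mathcal D_P)$. Your explicit inclusion of the product of all variables in the avoided hypersurface, guaranteeing the specialized $A$ is adequate, is a small but welcome precision that the paper leaves implicit.
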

\begin{proof}  Let $A_{\sf R}\in\mathcal U_{B,\sf R}$ be simply adequate.
Let $U_{\mathfrak A (s)}$ be a maximum-length $s$ $U$-chain of $\mathcal D_P, 1\le s\le r_P$. Theorem \ref{sthm} shows that when $\det (M_{\mathfrak A (s)})$ is expanded into a sum of monomials of $\sf R$ over $\sf k$ corresponding each to a chain factorization $f=(\nu_f, \sigma_f, C_f), C_f=\{c_{iu}\})$ as in Definition \ref{factornotation},  there is a unique term $\mu_{\mathfrak A (s)}$. Since $\sf k$ is an infinite field, we can choose $\theta: \sf F \to \sf k$, that is, substitute for the variables of $\sf R$, so that each $\theta (\det(M_{\mathfrak A (s)}))\not=0$. Thus, $A=\theta (A_{\sf R})\in \Mat_n({\sf k})\cap \mathcal U_B$ satisfies
rank$(\pi_{\mathfrak A (s)})= | U_{\mathfrak A (s)}|$ on $\mathcal T_{\mathfrak A(s),A}$  for each $U_{\mathfrak A (s)}$. As in Corollary~\ref{maincor}, this implies $P_{A}\ge \lambda_U(\mathcal D_P)$. By the irreducibility of $\mathcal U_B$, we have $Q(P)\ge \lambda_U(\mathcal D_P)$.
\end{proof}\par
 Write $Q(P)_{\sf k}=P_A$ for a generic $A\in \mathcal N_B, B=J_P$ over the field $\sf k$. Write  $Q(P)_{\mathbb R}$ over the reals $\mathbb R$ as  $Q(P)_{\mathbb R}=(q_1(\mathbb R),\ldots ,q_{r(P)}(\mathbb R))$ where $ q_1(\mathbb R)\ge q_2(\mathbb R)\ge\ldots $, and write $\lambda_U(\mathcal D_P)=(\lambda_{1,U}(P),\lambda_{2,U}(P),\ldots)$ where $ \lambda_{1,U}(P)\ge \lambda_{2,U}(P)\ge,\ldots$. 
\begin{corollary}\label{charcor} Let $\sf k$ be in infinite field. Fix a partition $P\vdash n$ an an integer $k, 1\le k\le r_P$.  Assume that $\sum_{1=1}^k q_i(\mathbb R)=\sum_{1=1}^k \lambda_{i,U}(P)$. Then the analogous equality holds for $Q(P)_{\sf k}$.
\end{corollary}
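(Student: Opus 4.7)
The plan is to prove $\sum_{i=1}^k q_i({\sf k})\le \sum_{i=1}^k q_i(\mathbb R)$ by a minors-with-integer-coefficients argument, and to combine this with the lower bound from Theorem~\ref{mainthm}. Write $S_L:=\sum_{i=1}^k q_i(L)$ for $L\in\{{\sf k},\mathbb R\}$; our task is to show $S_{\sf k}=\sum_{i=1}^k \lambda_{i,U}(P)$.

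By Lemma~\ref{ranklem} applied to a generic element of $\mathcal N_B(L)$, together with the fact that every element of $\mathcal N_B$ is $\mathcal C_B$-conjugate to an element of $\mathcal U_B$, we can express
\begin{equation*}
S_L=\max\{\dim_L \langle L[A]\circ\{v_1,\ldots,v_k\}\rangle: A\in\mathcal U_B(L),\ \vec v\in V(L)^k\}.
\end{equation*}
For fixed $(A,\vec v)$ this dimension is the rank of the $n\times nk$ matrix $M(A,\vec v)$ whose column blocks are $v_i,Av_i,\ldots,A^{n-1}v_i$. The subalgebra $\mathcal U_B\subset \Mat_n$ is cut out by integer-linear relations: the commutator condition $[A,B]=0$ and the strict upper-triangularity \eqref{uppertrieq}. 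Hence $\mathcal U_B\times V^k$ is an affine space over $\mathbb Z$, and every entry of $M$ is a polynomial in $\mathbb Z[\ldots]$ in these natural coordinates.

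Because $L$ is infinite, $S_L$ equals the largest $r$ for which some $r\times r$ minor of $M$ is nonzero as a polynomial in $L[\ldots]$. If $m\in\mathbb Z[\ldots]$ is nonzero in ${\sf k}[\ldots]$, then some integer coefficient of $m$ is not divisible by $\mathrm{char}({\sf k})$, hence is nonzero in $\mathbb Z$, so $m$ is also nonzero in $\mathbb R[\ldots]$. Consequently every minor nonzero over ${\sf k}$ is nonzero over $\mathbb R$, which gives $S_{\sf k}\le S_{\mathbb R}$.

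Combining $S_{\sf k}\ge \sum_{i=1}^k \lambda_{i,U}(P)$ from Theorem~\ref{mainthm} with the hypothesis $S_{\mathbb R}=\sum_{i=1}^k \lambda_{i,U}(P)$ then yields
\begin{equation*}
\sum_{i=1}^k \lambda_{i,U}(P)\le S_{\sf k}\le S_{\mathbb R}=\sum_{i=1}^k \lambda_{i,U}(P),
\end{equation*}
so equality holds throughout. The main technical point requiring care is checking that $\mathcal U_B$ can be coordinatized so that the minors of $M$ actually lie in $\mathbb Z[\ldots]$; this is a routine bookkeeping check from the definition. Everything else amounts to a standard specialization argument comparing polynomial vanishing across fields of different characteristic.
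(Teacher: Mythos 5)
Your argument is correct and follows essentially the same route as the paper: sandwich $\sum_{i=1}^k q_i({\sf k})$ between the lower bound $\sum_{i=1}^k\lambda_{i,U}(P)$ from Theorem~\ref{mainthm} and the upper bound $\sum_{i=1}^k q_i(\mathbb R)$, which the hypothesis pins down. The only difference is that the paper simply invokes the inequality $Q(P)_{\mathbb R}\ge Q(P)_{\sf k}$, whereas you supply its standard proof (rank of the Krylov-type matrix via integer-coefficient minors and specialization from characteristic $0$), which is a harmless elaboration rather than a different approach.
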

\begin{proof} This follows from $Q(P)_{\mathbb R}\ge Q(P)_{\sf k}$ and Theorem \ref{mainthm}.
\end{proof}\par
The second author has shown
\begin{uthm}\cite{Kha2}\label{Kcor} The minimum part of $\lambda(\mathcal D_P)$ is equal to the minimum part of $\lambda_U(\mathcal D_P)$.
\end{uthm}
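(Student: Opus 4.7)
Plan: The inequality $\lambda_U(\mathcal D_P) \le \lambda(\mathcal D_P)$ in dominance is immediate, since any $s$-$U$-chain is in particular a union of $s$ chains, so $u_s(\mathcal D_P) \le c_s(\mathcal D_P)$ for every $s$. Because both partitions sum to $n = |\mathcal D_P|$, passing to conjugates gives $\lambda(\mathcal D_P)^T \le \lambda_U(\mathcal D_P)^T$, forcing the number of (positive) parts to satisfy $a := a_{\mathcal D_P} \le r_P$. Thus the claim reduces to showing that the $a$-th part of $\lambda(\mathcal D_P)$ equals the $r_P$-th part of $\lambda_U(\mathcal D_P)$, which is a nontrivial matching of the two smallest parts and is somewhat dual in spirit to Theorem \ref{obthm}, which matches the largest parts.

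The first major step is to obtain an explicit combinatorial formula for the minimum part of $\lambda_U(\mathcal D_P)$ in the spirit of the Oblak length formula \eqref{obeq}. I would analyze the innermost component chain $U_{\mathfrak A, r_P}$ of a maximum $r_P$-$U$-chain, using the shelling description \eqref{shelleq}. Its length is the minimum part of $\lambda_U(\mathcal D_P)$ and admits an expression as a function of the innermost entry $a_{r_P}$ of $\mathfrak A = (a_1, \ldots, a_{r_P})$ together with the multiplicities $n_i$ of the levels of $\mathcal D_P$ lying above it. Optimizing over all maximal $r_P$-tuples $\mathfrak A$ yields a closed expression purely in terms of $P$.

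The second step is to show that the minimum part of $\lambda(\mathcal D_P)$ matches this value. Here I would exploit the involution $\tau$ of \eqref{taueq}, claiming that a minimum chain cover of $\mathcal D_P$ realizing the Greene numbers $c_i(\mathcal D_P)$ can always be replaced by one that is $\tau$-symmetric with a nested shelling structure analogous to Definition \ref{suchaindef}. Starting from an arbitrary optimal cover, one pairs non-symmetric chains via $\tau$ and amalgamates them into $\tau$-symmetric counterparts without decreasing coverage. The structural constraints \eqref{1eq}, \eqref{2eq}, \eqref{3eq}, and Corollary \ref{bpowcor} would then pin down the innermost leftover chain in this normalized cover, forcing its length to equal that of the innermost $U$-chain component $U_{\mathfrak A, r_P}$.

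The main obstacle is the straightening-and-amalgamation step. A general chain in $\mathcal D_P$ can traverse levels in ways not accessible to a $U$-chain (whose body lies in two adjacent levels plus two symmetric hooks), so producing a reshaping procedure that preserves cumulative vertex coverage while yielding a nested $\tau$-symmetric decomposition requires careful swap arguments on the elementary maps $\alpha_i, \beta_i, e_{i,k}, w_\ell$ of Definition \ref{DPdef}. I expect the proof in \cite{Kha2} proceeds by an explicit algorithm of this form, inductively reducing the complexity of the chain cover while using \eqref{bpoweq} to control the length of the innermost chain at each step.
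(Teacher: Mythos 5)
A preliminary remark: the paper does not prove this statement at all --- it is quoted from \cite{Kha2}, and is used only (with Theorem \ref{mainthm} and \eqref{qlambdaeq}) to deduce Corollary \ref{3.9cor} --- so your proposal has to stand on its own, and as written it is an outline whose decisive steps are missing. Your reduction is reasonable: writing $r=r_P$, the two minimum parts are $n-c_{r-1}(\mathcal D_P)$ and $n-u_{r-1}(\mathcal D_P)$, provided one knows $u_r(\mathcal D_P)=n$ and that $\lambda(\mathcal D_P)$ really has $r_P$ parts; conjugate dominance only gives you $a_{\mathcal D_P}\le r_P$, and without the reverse inequality (an antichain of size $r_P$ in $\mathcal D_P$) even the ``easy'' inequality $\min\lambda(\mathcal D_P)\le\min\lambda_U(\mathcal D_P)$ does not follow from dominance plus equal sums. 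Your first step also silently identifies $\min\lambda_U(\mathcal D_P)$ with the length of the innermost component of a maximum-length $r_P$-$U$-chain; since $u_{r-1}$ and $u_r$ in Definition \ref{lambdaUdef} are maxima over independent choices of $\mathfrak A$, this needs an exchange or nesting argument in the spirit of \cite{Kha}, not just the shelling formula \eqref{shelleq}.

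The genuine gap is your second step, which is the entire content of the theorem: one must show $c_{r-1}(\mathcal D_P)\le u_{r-1}(\mathcal D_P)$, i.e.\ that $r_P-1$ arbitrary disjoint chains never cover more vertices than the best $(r_P-1)$-$U$-chain. Your proposed mechanism --- replace an optimal family of chains by a $\tau$-symmetric nested one ``without decreasing coverage,'' then use \eqref{1eq}--\eqref{3eq} and Corollary \ref{bpowcor} to force it into $U$-chain form --- is stated as a hope: no amalgamation procedure is given, and there is no reason why pairing chains with their $\tau$-images preserves the number of covered vertices (an optimal family need not be close to $\tau$-stable, and naive symmetrization creates overlaps and loses vertices). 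Moreover, a normalization of the generality you describe, if it worked for every number of chains $s$, would prove $\lambda(\mathcal D_P)=\lambda_U(\mathcal D_P)$ outright, i.e.\ answer Question \ref{1.5quest}, which the paper explicitly leaves open; the known cases --- the largest part ($s=1$, Theorem \ref{obthm}) and the smallest part (\cite{Kha2}) --- are each substantial theorems. So your plan must exploit something special about the last part, and you do not say what; the sentence ``I expect the proof in \cite{Kha2} proceeds by an explicit algorithm of this form'' sits exactly where the proof should be.
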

This together with Theorem \ref{mainthm} and \eqref{qlambdaeq} show
\begin{corollary}\cite{Kha2} Let $\sf k$ be an infinite field. The minimum part of $Q(P)$ is equal to the minimum part $m_P$ of $\lambda_U(\mathcal D_P)$.
\end{corollary}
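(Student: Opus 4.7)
The plan is to combine Theorem~\ref{mainthm}, the inequality~\eqref{qlambdaeq}, and the preceding theorem of \cite{Kha2} to sandwich $Q(P)$ in the dominance order and transfer the minimum-part information. From Theorem~\ref{mainthm} we have $Q(P) \ge \lambda_U(\mathcal D_P)$, and from \eqref{qlambdaeq} we have $\lambda(\mathcal D_P) \ge Q(P)$, so
\[
\lambda(\mathcal D_P) \ge Q(P) \ge \lambda_U(\mathcal D_P),
\]
while the preceding theorem asserts that the two outer partitions share a common minimum part $m_P$.

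Next I would verify that each of the three partitions has exactly $r_P$ parts. Basili's theorem gives that $Q(P)$ has $r_P$ parts. The partition $\lambda_U(\mathcal D_P)$ has at most $r_P$ parts by construction (Definition~\ref{lambdaUdef} uses only $s$-$U$-chains with $s \le r_P$); since dominance never increases the number of parts, $Q(P) \ge \lambda_U(\mathcal D_P)$ forces $\lambda_U(\mathcal D_P)$ to have exactly $r_P$ parts. For $\lambda(\mathcal D_P)$, the bound $r_\lambda \le r_P$ follows at once from $\lambda(\mathcal D_P) \ge Q(P)$; the reverse inequality $r_\lambda \ge r_P$ is equivalent, by Dilworth's theorem, to exhibiting an antichain of $r_P$ pairwise incomparable vertices in $\mathcal D_P$, which can be produced from a minimal almost rectangular decomposition $P = P_1 \cup \cdots \cup P_{r_P}$ of $P$ by selecting a suitable vertex attached to each block.

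With all three partitions having exactly $r_P$ parts, the monotonicity of the minimum part under dominance among partitions with the same number of parts closes the argument: evaluating each dominance inequality at $k = r_P - 1$, the right inequality gives $Q(P)_{r_P} \le (\lambda_U(\mathcal D_P))_{r_P} = m_P$, while the left inequality gives $Q(P)_{r_P} \ge \lambda(\mathcal D_P)_{r_P} = m_P$. Together these force $Q(P)_{r_P} = m_P$, as required. The main obstacle is the careful identification of the antichain of size $r_P$ in $\mathcal D_P$; this is intuitively natural from the structure of a maximum $r_P$-$U$-chain but requires justification beyond what follows immediately from the dominance inequalities we already have in hand, and is presumably treated in detail in \cite{Kha2}.
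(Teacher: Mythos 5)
Your proposal is correct and takes essentially the same route as the paper, which obtains the corollary by combining the \cite{Kha2} equality of the minimum parts of $\lambda(\mathcal D_P)$ and $\lambda_U(\mathcal D_P)$ with the sandwich $\lambda(\mathcal D_P)\ge Q(P)\ge \lambda_U(\mathcal D_P)$ coming from \eqref{qlambdaeq} and Theorem~\ref{mainthm}. You additionally spell out the equal-number-of-parts bookkeeping (all three partitions have $r_P$ parts) that the transfer of the minimum part genuinely requires; the one sub-claim you defer — that $\mathcal D_P$ contains an antichain of size $r_P$, so that $\lambda(\mathcal D_P)$ has $r_P$ parts — is true and is likewise left to \cite{Kha2} by the paper itself.
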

An explicit formula for $m_P$ in terms of $P$ is given in \cite{Kha2}. Our result also has the corollary of extending  
P. Oblak's Theorem \ref{obthm} to an infinite field $\sf k$. These show
 
\begin{corollary}(\cite{Obl1}\label{main2cor} $r_P=2$,\cite{Kha2} $r_p= 3$).\label{3.9cor} Let $\sf k$ be an infinite field. When $r_P\le 3, Q(P)=\lambda_U(\mathcal D_P)=\lambda (\mathcal D_P)$ and can be explicitly written in terms of $P$.
\end{corollary}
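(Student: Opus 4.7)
The plan is to sandwich $Q(P)$ between $\lambda_U(\mathcal D_P)$ and $\lambda(\mathcal D_P)$ and then show that for $r_P\le 3$ these two partitions coincide, which squeezes $Q(P)$ to equal both. The dominance chain
\begin{equation*}
 \lambda(\mathcal D_P)\ \ge\ Q(P)\ \ge\ \lambda_U(\mathcal D_P)
\end{equation*}
is already available: the first inequality follows from \eqref{qlambdaeq} applied to the incidence algebra $\mathcal N(\mathcal D_P)\supset \mathcal U_B$, and the second is Theorem \ref{mainthm}. Moreover, since $\lambda_U(\mathcal D_P)$ coincides with any Oblak-process partition $\Ob_{\mathcal O}(P)=(|C_1|,\dots,|C_{r_P}|)$, it has exactly $r_P$ nonzero parts.

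Write $\lambda_U(\mathcal D_P)=(u_1,\dots,u_{r_P})$ and $\lambda(\mathcal D_P)=(\ell_1,\dots,\ell_k)$, both partitions of $n$. I will combine three ingredients: (i) $\ell_1=u_1$, by Theorem \ref{obthm}; (ii) the minimum parts agree, $\ell_k=u_{r_P}$, by the Khatami theorem quoted just above; and (iii) the termwise dominance $\sum_{i\le j}\ell_i \ge \sum_{i\le j}u_i$ for every $j$. The goal is to deduce $k=r_P$ and $\ell_i=u_i$ for each $i$.

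The case $r_P=1$ is immediate. For $r_P=2$, (i) gives $\ell_2+\ell_3+\cdots=n-u_1=u_2$, and since each remaining part is at least $u_2$ by (ii), only one such part can occur and it equals $u_2$. For $r_P=3$, (iii) at $j=2$ gives $\ell_2\ge u_2$, so $\ell_3+\ell_4+\cdots=(u_2+u_3)-\ell_2\le u_3$; since every such term is at least $u_3$ by (ii), at most one remains, forcing $k\le 3$. The alternatives $k\le 2$ would make the minimum part of $\lambda$ equal to $u_2+u_3>u_3$, contradicting (ii); hence $k=3$, and then $\ell_2\ge u_2$, $\ell_3\ge u_3$ together with $\ell_2+\ell_3=u_2+u_3$ pins down $\ell_2=u_2$, $\ell_3=u_3$. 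Thus $\lambda(\mathcal D_P)=\lambda_U(\mathcal D_P)$, and the sandwich forces $Q(P)=\lambda_U(\mathcal D_P)=\lambda(\mathcal D_P)$.

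For the ``explicitly written in terms of $P$'' clause, I would simply read off the parts $u_i=|C_i|$ from any fixed Oblak process: the length of each component simple $U$-chain $U_{a_i}$ produced by the peeling is given in closed form by \eqref{obeq}, and the recursive description of the Oblak processes in \cite{BKO,Kha} then expresses each $|C_i|$ directly from the multiplicities of $P$. No genuine obstacle arises once Theorem \ref{mainthm}, Theorem \ref{obthm}, \eqref{qlambdaeq}, and the Khatami minimum-parts theorem are assembled; the real content is the short partition-theoretic squeeze above. The most delicate input is that $\lambda_U(\mathcal D_P)$ really has exactly $r_P$ parts, which is provided by Basili's identification of $r_P$ with the number of chains produced by any Oblak process.
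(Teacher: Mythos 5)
Your proposal is correct and matches the paper's (very terse) argument: the paper likewise deduces the corollary by sandwiching $Q(P)$ between $\lambda(\mathcal D_P)$ and $\lambda_U(\mathcal D_P)$ via \eqref{qlambdaeq} and Theorem \ref{mainthm}, and then invoking the agreement of the largest parts (Oblak's index theorem) and of the minimum parts (Khatami) to force equality when $r_P\le 3$, with explicitness coming from \eqref{obeq} and the cited formulas. Your write-up simply makes explicit the elementary partition squeeze that the paper leaves implicit.
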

\begin{example}\label{rp=3ex}  For $P= (5,4,3^3,2^3,1^2), r_P=3$. The maximum-length simple $U$-chains are $U_3$ of length $|U_3|=3(3)+3(2)+2(2)=19$, where the two hooks each have length two,  and also $U_2$. The maximum-length 2-$U$ chain is $U_{4,2}$ of length $|U_{4,2}|=25$. So $ Q(P)= (19,6,1)$.
\end{example}
\begin{figure}
\begin{equation*}
M_{\mathfrak A}=\left(
\begin{array}{c|ccccccc}
& \sf a&\sf e&\sf g&\sf f&\sf d&\sf b&\sf c\\
\hline
1\otimes_{\sf F}  \sf a&1&0&\ldots &&&&0\\
x\otimes_{\sf F}  \sf a&0&s_4&0& 0&0&z_4&0\\
x^2\otimes_{\sf F}  \sf a&0&0&s_2s_4&s_4z_4&0&0&t_4s_4+z_4^2\\
x^3\otimes_{\sf F}  \sf a&0&0&0&t_2s_2s_4&t_4s_4z_4+z_4t_4s_4+z_4^3&0&0\\
x^4\otimes_{\sf F}  \sf a& 0&0&0&0&t_4t_2s_2s_4&0&0\\
1\otimes_{\sf F}  \sf b&0&&\ldots &&0&1&0\\
x\otimes_{\sf F}  \sf b&0&0&0&s_4&0&0&z_4\\
\end{array}
\right)
\end{equation*}
\vspace{-0.5cm}
\protect\caption{Matrix $M_{\mathfrak A}$ for $\mathfrak A=(4,2), P=(4,2,1)$.\label{421fig}}
\end{figure} 
\begin{example}\label{421advex} Recall from Example \ref{421ex} that for $P=(4,2,1)$  we have $r_P=2$;  from this and Oblak's index formula \eqref{obeq}   we have $Q(P)=(5,2)$. We use the notation of Example~\ref{421ex} and Figure \ref{421afig} for the basis $\sf 
B$ of $V$. The simply adequate $A_{\sf R}$ of \eqref{Aeq} and  Corollary~\ref{maincor} with coefficients in $\sf R$ satisfies
\begin{equation}\label{421adveq}
A_{\sf R}\cdot {\sf a}=z_4{\sf b}+s_4{\sf e}, \,A_{\sf R}\cdot {\sf e}=t_4{\sf c}+s_2{\sf g}, \,A_{\sf R}\cdot {\sf g}=t_2{\sf f},
\end{equation}
where $s_4,s_2,t_4,t_2,z_4$ are the variables of $\sf R$. Since $A_{\sf R}$ commutes with $B$, these determine $A_{\sf R}$. The matrix $M_{\mathfrak A}$ is given in Figure \ref{421fig}; the entries can be obtained from Figure~\ref{421DPfig}
by multiplying the variables of $\sf R$ labelling the edges of the chain corresponding to each entry.
\begin{figure}[htb]
\begin{center}
\includegraphics[scale=.53]{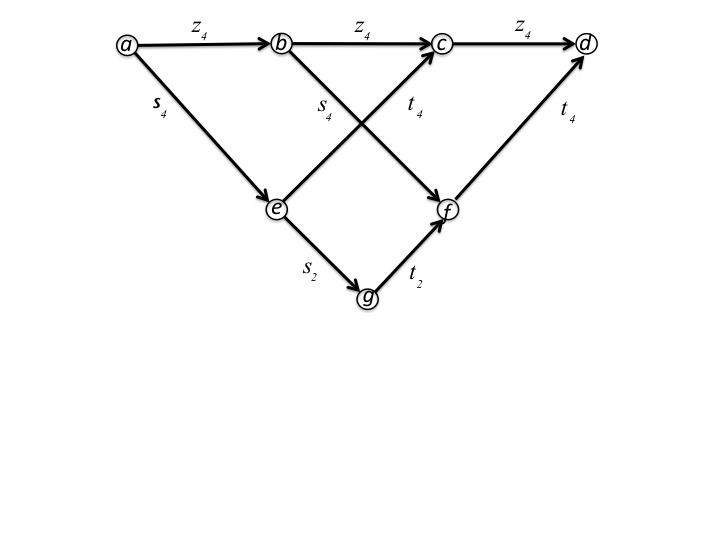}
\end{center}\vskip -4.6cm
\protect\caption{ $\mathrm{Diag}(\mathcal D_P)$ and variables in $\sf R$ for  $P=(4,2,1)$.} \label{421DPfig}
\end{figure}\vskip 0.4cm The determinant of $ M_{\mathfrak A}$ has a unique non-zero term: its unique chain factorization arises from $\mu_{\mathfrak A}$:
\begin{equation*}
\det M_{\mathfrak A}=\mu_{\mathfrak A}=1\cdot s_4\cdot s_2s_4\cdot t_2s_2s_4\cdot t_4t_2s_4s_2\cdot 1\cdot z_4=s_4^4s_2^3t_2^2t_4z_4.
\end{equation*}
When, as here, there is a unique maximum length chain from the source $\sf a$ to the
sink $\sf d$ of $\mathcal D_P$, any matrix $A$ as in \eqref{Aeq} such that the values of $s_i, t_i,t_{i,k}, z_\ell$ are non-zero in $\sf k$ satisfies, the maximum part of the Jordan type of $A$ is the index ${\mathrm i}(Q(P))$.
Here, setting the variables of $\sf R$ of \eqref{421adveq} equal to $1$ yields the matrix $A\in \mathcal U_B$ of \eqref{421matrix1eq} satisfying $\dim \langle \sf k[A]\cdot\{ \sf a ,\sf b\}\rangle=7$ and $\dim \langle \sf k[A]\cdot\{ \sf a \}\rangle=5$. \par Also the matrix $\Mat_{\mathfrak A'}$ for $\mathfrak A'=(2)$ is the leading $5\times 5 $ minor of $\Mat_{\mathfrak A}$, with determinant the monomial $\mu_{\mathfrak A,1}$ in $\sf R$.
This shows that here $P_A=Q(P)=(5,2)$, as stated in Example~\ref{421ex} and Corollary~\ref{main2cor}. \par

\end{example}

\begin{remark} 

Even if the questions of Section \ref{intsec} be answered, it still appears subtle to understand,  given a stable partition $Q$, the set of partitions $P$ such that $Q(P)=Q$: see \cite{Obl2} for some results  and open problems in this direction.\par
We have wondered why this problem of understanding the map $P\to Q(P)$ was not posed much earlier in the literature.  Perhaps it was supplanted by another natural problem, to characterize maximal vector spaces of commuting matrices \cite{Sup}. \par
Recent work of E. Friedlander, J. Pevtsova, and A. Suslin on modular representations has involved both Jordan types and the variety of commuting nilpotent matrices  \cite{FPS}.
\end{remark}
\normalsize

\begin{remark}[The field $\sf k$]\label{fieldrem}
J. R Britnell and M.~Wildon have shown that over the finite field ${\sf k}(p^r)$ having $p^r$ elements, the Jordan types $P_A=(d+1,d-1)$ and $ P_B=(d,d)$ occur for two commuting nilpotent matrices $A,B$  if and only if $d$ is not divisible by $p(p^{2r}-1)/2$ for $p>2$, and by $2(4^r-1)$ when $p=2$  \cite[Proposition~4.12]{BrWi}. However, when $\sf k$ is infinite, they show that there are always commuting matrices $A,B$ in these two orbits \cite[Remark 4.15]{BrWi}.\par G.~McNinch showed in \cite[Example 22]{McN}
(see also \cite[Example 2.18]{BI}) that the class of a generic linear combination $A+tB$ of certain nilpotent $A,B$ in a tensor product
$V=V_d\otimes V_2$ over an infinite field $\sf k$ depends on the characteristic of $\sf k$: this class is  $(d+1,d-1)$ for $d$ invertible, but is $(d,d)$ when $d$ divides $ \cha \sf k$.
 It appears to be open whether
the set of pairs of Jordan partitions for the similarity classes of two commuting nilpotent matrices depends on $\cha \sf k$ when $\sf k$ is an infinite field.\par

\end{remark}

\vskip 0.4cm\noindent
e-mail:  
a.iarrobino@neu.edu\\
khatamil@union.edu
\end{document}